\let\olddiamond\diamond
\let\oldsquare\square 
\renewcommand{\square}{\oldsquare}
\renewcommand{\diamond}{\olddiamond}
\numberwithin{equation}{section}
\numberwithin{figure}{section}
\newtheorem{theorem}{Theorem}[section]
\newtheorem{proposition}[theorem]{Proposition}
\newtheorem{lemma}[theorem]{Lemma}
\theoremstyle{definition}
\newtheorem{definition}[theorem]{Definition}
\newtheorem{problem}{Open Problem}
\newcommand*{\Id}{\ensuremath{\mathrm{I}_d}}
\newcommand*{\N}{\ensuremath{\mathbb{N}}}
\newcommand*{\Z}{\ensuremath{\mathbb{Z}}}
\newcommand*{\R}{\ensuremath{\mathbb{R}}}
\newcommand*{\Zd}{\ensuremath{\mathbb{Z}^d}}
\newcommand*{\Rd}{\ensuremath{\mathbb{R}^d}}
\newcommand{\eps}{\varepsilon}
\renewcommand*{\tilde}{\widetilde}
\renewcommand{\P}{\ensuremath{\mathbb{P}}}
\newcommand{\qand}{\quad \mbox{and} \quad }
\newcommand{\sym}{\mathrm{sym}}
\newcommand{\ep}{\eps}
\newcommand{\A}{\mathcal{A}}
\DeclareMathOperator{\var}{var}
\DeclareMathSymbol{\shortminus}{\mathbin}{AMSa}{"39}
\newcommand{\nf}{\nicefrac}
\newcommand{\cs}{\mathfrak{c}_s} 
\newcommand{\css}[1]{\mathfrak{c}_{#1}}
\newcommand{\E}{\mathbb{E}}
\newcommand{\F}{\mathcal{F}}
\DeclareSymbolFont{boldoperators}{OT1}{cmr}{bx}{n}
\newcommand\thickbar[1]{\accentset{\rule{.45em}{.6pt}}{#1}}
\renewcommand{\bar}{\thickbar}
\renewcommand{\a}{\mathbf{a}}
\newcommand{\m}{\mathbf{m}}
\newcommand{\ahom}{\bar{\a}}
\newcommand{\Lams}{\gamma_1}
\newcommand{\lams}{\gamma_2}
\newcommand{\specs}{\nu}
\newcommand{\negphantom}{\v@true\h@true\negph@nt} 
\newcommand{\neghphantom}{\v@false\h@true\negph@nt} 
\newcommand{\negph@nt}{\ifmmode\expandafter\mathpalette 
	\expandafter\mathnegph@nt\else\expandafter\makenegph@nt\fi} 
\newcommand{\makenegph@nt}[1]{%
	\setbox\z@\hbox{\color@begingroup#1\color@endgroup}\finnegph@nt} 
\newcommand{\finnegph@nt}{%
	\setbox\tw@\null 
	\ifv@ \ht\tw@\ht\z@\dp\tw@\dp\z@\fi \ifh@\wd\tw@-\wd\z@\fi\box\tw@} 
\newcommand{\mathnegph@nt}[2]{%
	\setbox\z@\hbox{$\m@th #1{#2}$}\finnegph@nt} 
\newcommand{\Besov}[3]{\mathring{\phantom{B}}\negphantom{B}\underline{B}^{#1}_{#2,#3}}
\newcommand{\Besovnoul}[3]{\mathring{\phantom{B}}\negphantom{B}{B}^{#1}_{#2,#3}}
\newcommand{\Bhatminusul}[3]{\hat{\phantom{B}}\negphantom{B}\underline{B}^{#1}_{#2,#3}}
\newcommand{\Besovul}[3]{\underline{\phantom{B}}\negphantom{B}B^{#1}_{#2,#3}}
\def\Xint#1{\mathchoice
	{\XXint\displaystyle\textstyle{#1}}%
	{\XXint\textstyle\scriptstyle{#1}}%
	{\XXint\scriptstyle\scriptscriptstyle{#1}}%
	{\XXint\scriptscriptstyle\scriptscriptstyle{#1}}%
	\!\int}
\def\XXint#1#2#3{{\setbox0=\hbox{$#1{#2#3}{\int}$}
		\vcenter{\hbox{$#2#3$}}\kern-.5\wd0}}
\def\fint{\Xint-}
\newcommand{\avsum}{\mathop{\mathpalette\avsuminner\relax}\displaylimits}
\newcommand\avsuminner[2]{%
	{\sbox0{$\m@th#1\sum$}%
		\vphantom{\usebox0}%
		\ooalign{%
			\hidewidth
			\smash{\,\rule[.23em]{8.8pt}{1.1pt} \relax}%
			\hidewidth\cr
			$\m@th#1\sum$\cr
		}%
	}%
}
\newcommand\avsuminnerr[2]{%
	{\sbox0{$\m@th#1\sum$}%
		\vphantom{\usebox0}%
		\ooalign{%
			\hidewidth
			\smash{\,\rule[.23em]{6pt}{0.7pt} \relax}%
			\hidewidth\cr
			$\m@th#1\sum$\cr
		}%
	}%
}
\let\originalleft\left
\let\originalright\right
\renewcommand{\left}{\mathopen{}\mathclose\bgroup\originalleft}
\renewcommand{\right}{\aftergroup\egroup\originalright}
\newcommand{\cu}{\square}
\renewcommand{\hat}{\widehat}
\newcommand{\addperiod}[1]{#1.}
\titleformat*{\subsection}{\bfseries}
\titleformat{\subsubsection}[runin]
{\normalfont\bfseries}
{\thesubsubsection.}
{0.5em}
{\addperiod}
\titleformat*{\subsubsection}{\normalfont\itshape}
\titleformat*{\paragraph}{\bfseries}
\titleformat*{\subparagraph}{\large\bfseries}
\title{A coarse-graining theory for elliptic operators and homogenization in high contrast} 
\author{Scott Armstrong
\thanks{Laboratoire Jacques-Louis Lions, Sorbonne Universit\'e 
and
Courant Institute of Mathematical Sciences, New York University.
{\footnotesize \href{mailto:scotta@cims.nyu.edu}{scotta@cims.nyu.edu}.}
}
\and
Tuomo Kuusi
\thanks{Department of Mathematics and Statistics, University of Helsinki.
{\footnotesize \href{mailto:tuomo.kuusi@helsinki.fi}{tuomo.kuusi@helsinki.fi}.}
}
}
\date{September 29, 2025}
\begin{document}

\maketitle

\begin{abstract}
We review a coarse-graining theory for divergence-form elliptic operators. The construction centers on a pair of coarse-grained matrices defined on spatial blocks that encode a scale-dependent notion of ellipticity, transmit precise information from small to large scales, and yield coarse-grained counterparts of standard elliptic estimates. Under simplifying assumptions, we give a complete proof of the result of \cite{AK.HC} that homogenization is reached within at most $C \log^2(1+\Theta)$ dyadic length scales in the high-contrast regime, where $\Theta$ is the ellipticity contrast. We argue that this scale-local notion of ellipticity is genuinely iterable across arbitrarily many scales, providing a framework for a rigorous renormalization group analysis.
\end{abstract}


\section{Introduction}

\subsection{Coarse-graining for elliptic operators}
Many models in mathematical physics have detailed microscopic descriptions with numerous degrees of freedom. In such systems, it is natural to seek a macroscopic description via \emph{coarse-graining}: at each length scale, replace the microscopic variables by scale-dependent effective parameters constructed from block variables on spatial blocks. This procedure deliberately discards fine-scale information, with the aim of proving that, on appropriate length scales, the original system is well-approximated by the coarse-grained one with quantitative error bounds. Iterating this step---repeatedly integrating out short scales and rescaling---generates a flow of effective parameters whose fixed points encode universal large-scale behavior and yields an effective macroscopic description. This is called the \emph{renormalization group} in the physics literature, and it plays a fundamental role in quantum and statistical field theory, critical phenomena, turbulence, and other problems with many active scales.

\smallskip

Recent work of the authors develops a coarse-graining theory for divergence-form elliptic operators~$-\nabla\cdot \a(x)\nabla$. At each length scale and for each spatial block~$\cu$ we assign a pair of coarse-grained diffusion matrices, denoted by $\a(\cu)$ and $\a_*(\cu)$, respectively defined via Dirichlet and Neumann boundary-value problems on $\cu$ (for symmetric $\a$, with the definition for nonsymmetric~$\a$ more complicated). We introduce a notion of \emph{coarse-grained ellipticity} by considering the largest and smallest eigenvalues of these matrices over all sub-blocks of~$\cu$, with smaller blocks discounted rather than discarded, in the spirit of a negative-regularity (Besov-type) norm. We establish coarse-grained counterparts of many standard elliptic regularity estimates under this weaker, scale-dependent hypothesis. From the probabilistic point of view, we prove two-sided, scale-dependent bounds on the variance growth (effective diffusivity) of the associated diffusion process, quantitatively linked to the coarse-grained ellipticity constants.

\smallskip

Analytically, the key advantage of working with the coarse-grained matrices is \emph{closure}: the coarse-grained data on a larger block are controlled in terms of the same kind of data on smaller blocks, enabling a genuine scale-by-scale induction. In particular, the family~$(\a(\cu),\a_*(\cu))$, indexed over all blocks up to a given scale, retains enough information about the behavior of the solutions to recover the coarse-grained versions of the essential elliptic estimates we require. Our scheme assumes coarse-grained ellipticity at a given scale and, under appropriate mixing and integrability hypotheses, \emph{improves} it at the next scale via an explicit update inequality; because the hypothesis and conclusion live in the same class, the step iterates across arbitrarily many scales. This iterability is what makes the construction a rigorous renormalization group scheme.

\smallskip

Our approach grows out of the quantitative homogenization program developed jointly with Mourrat and Smart~\cite{AS,AKM1,AKMbook}, where the coarse-grained matrices and multiscale iterations already appear. 
However, as with the quantitative theory of Gloria, Neukamm and Otto~\cite{GNO,GNO2}, the quantitative estimates driving these iterations depend on the microscopic uniform ellipticity ratio and therefore are not effective in regimes with many active scales or when the uniform ellipticity ratio is large. The idea of viewing homogenization as coarse-graining or of defining a ``box diffusivity'' via variational energies in cubes is classical (see~\cite{AvM,MK,Fann}) and the idea of using the coarse-grained quantity~$\a(\cu)$ to prove homogenization goes back to De Giorgi and Spagnolo~\cite{DGS,DeG}.  Homogenization has previously been linked to rigorous renormalization group arguments in~\cite{BK,BKL,SZ} and is directly used in a deterministic context in~\cite{AV}. What is new in our approach is the closure of the analytic framework at the coarse-grained level. 

\smallskip

In this article, we review some basic elements of the coarse-graining theory. To illustrate its efficacy as a renormalization scheme, we give a complete proof in a streamlined setting (symmetric fields, isotropy in law, finite-range dependence) of the recent high-contrast result from~\cite{AK.HC} that homogenization is reached within at most~$C\log^2(1+\Theta)$ dyadic scales, where~$\Theta$ is the coarse-grained ellipticity contrast. 
While not discussed here, we also mention our companion work, joint with Bou-Rabee~\cite{ABK.SD}, where the same coarse-grained machinery is used to prove a (quenched) \emph{superdiffusive central limit theorem} for a particle in a critically correlated random drift. Both examples sit at or near criticality in the sense that the analysis must range across arbitrarily many scales, and together they illustrate how coarse-grained ellipticity supplies a framework under which the iterations underlying these arguments can be closed.

\subsection{High contrast homogenization} 

Consider a coefficient field~$\a(x)$ valued in~$\R^{d \times d}$ and satisfying, for some constants~$0<\lambda\leq \Lambda <\infty$, the uniform ellipticity condition
\begin{equation}
\label{e.UE}
\lambda |e|^2 \leq e \cdot \a (x)e
\qquad \mbox{and} \qquad 
\Lambda^{-1} |e|^2 \leq e \cdot \a^{-1} (x)e \,,
\quad \forall 
x,e\in\Rd
\,.
\end{equation}
If we assume also that~$\a(x)$ is a stationary random field satisfying a quantitative mixing condition (such as finite range dependence), then with high probability the operator~$-\nabla \cdot \a(x)\nabla$ will be quantitatively close, on large scales, to a deterministic, constant-coefficient operator~$-\nabla \cdot \ahom \nabla$. This principle is referred to as \emph{homogenization}. 
The question which interests us is what ``on large scales'' precisely means. Starting at the length scales of the correlations of the field, how far must we zoom out before we see homogenization? The aim of the \emph{high-contrast homogenization problem} is to quantify the minimal scale at which homogenization appears---the \emph{homogenization length scale}---as a function of the ellipticity contrast~$\Theta\coloneqq \Lambda/\lambda$. 

\smallskip

Over the past fifteen years, quantitative stochastic homogenization has produced sharp and explicit error estimates in the regime of large scale separation~\cite{GO1,GNO,AS,GNO2,AKM1,AKMbook}. However, the constants in these bounds deteriorate badly with the ellipticity contrast~$\Theta$ is large, so the estimate of the minimal scale needed to reach a fixed error tolerance can be extremely large when~$\Theta \gg 1$. 
While not explicitly estimated (since all constants were allowed to depend on~$\Theta$), the best bound obtained by chasing the constants through these papers is
\begin{equation*}
\mbox{homogenization length scale} 
\ \lesssim \ 
C \exp \bigl( C \Theta^{\nf12} \log \Theta \bigr) \,.
\end{equation*}
Here~$C<\infty$ is a constant depending only on~$d$ and the quantitative mixing condition.

\smallskip 

A much better estimate was obtained recently by the authors~\cite{AK.HC}, where we also consider fields satisfying a much weaker, coarse-grained ellipticity condition (see Definition~\ref{def.cg.ellipticity} below). 

\begin{theorem}[\cite{AK.HC}]
\label{t.AK.HC}
The homogenization length scale is at most~$C \exp ( \log^2 \Theta )$. 
\end{theorem}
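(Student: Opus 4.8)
The plan is to prove that homogenization is reached within $C\log^2(1+\Theta)$ dyadic scales by running the renormalization scheme described in the introduction: we set up a scale-indexed quantity measuring the defect of homogenization (the discrepancy between the coarse-grained Dirichlet and Neumann matrices $\a(\cu)$ and $\a_*(\cu)$, suitably averaged over subcubes in the negative-Besov sense), and show it contracts under passing from one triadic scale to the next. The first step is to record the basic algebraic facts: $\a_*(\cu) \le \a(\cu)$ in the order of symmetric matrices (a consequence of the variational characterizations, since the Neumann energy is the convex dual of the Dirichlet energy), and the monotonicity/subadditivity of each under partitioning a cube into subcubes. These give that the largest eigenvalue of $\a(\cu)$ is nonincreasing and the smallest eigenvalue of $\a_*(\cu)$ is nondecreasing along the scales, so the relevant contrast $\Theta(\cu) := \lambda_{\max}(\a(\cu))/\lambda_{\min}(\a_*(\cu))$ can only improve; the content of the theorem is a quantitative rate.

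Second, I would establish the key one-step improvement inequality. Working at scale $3^{m}$ and using the finite-range (or quantitative mixing) hypothesis together with isotropy in law, one compares the coarse-grained matrix on a cube of size $3^{m+k}$ to the average of those on its $3^{dk}$ subcubes of size $3^m$; independence across well-separated subcubes plus a Caccioppoli/energy estimate at the coarse-grained level yields that the normalized trace of $\a(\cu)-\a_*(\cu)$ at the larger scale is bounded by a fixed fraction (say $\tfrac12$) of its value at the smaller scale, plus an error term controlled by the current contrast $\Theta_m$ and the number of subcubes, i.e. something like $J_{m+k} \le \tfrac12 J_m + C\Theta_m 3^{-ck}$ in schematic form. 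The crucial point, emphasized in the introduction, is \emph{closure}: the right-hand side involves only the same coarse-grained data, so the step iterates.

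Third, I would turn this recursion into the claimed bound. Since each step at a cost of boundedly many scales halves $J_m$ while $\Theta_m$ stays comparable until $J_m$ drops below $1$, and since $J_0 \lesssim \Theta$, one needs about $\log_2 \Theta$ halvings to reach $J_m \lesssim 1$; but the error term $C\Theta_m 3^{-ck}$ forces us to take the scale gap $k$ comparable to $\log \Theta$ at each stage to keep the error subdominant, so the total number of dyadic scales consumed is $\sim (\log\Theta)\cdot(\log\Theta) = \log^2\Theta$. Once $J_m \lesssim 1$, the contrast at that scale is $O(1)$ and one invokes the standard quantitative homogenization theory (now with $O(1)$ ellipticity ratio, hence $O(1)$ further scales) to conclude. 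I expect the main obstacle to be the one-step improvement: making the comparison between scales genuinely quantitative requires the coarse-grained Caccioppoli and energy-quantization estimates to hold with constants \emph{independent of $\Theta$}, using only coarse-grained ellipticity on the subcubes, and carefully tracking how the mixing error depends on the current contrast $\Theta_m$ — it is precisely this $\Theta_m$-dependence in the error that dictates the $\log^2$ rather than $\log$ rate, so getting the exponents right there is the heart of the matter.
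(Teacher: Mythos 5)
Your bookkeeping for the exponent (about $\log\Theta$ contrast-halvings, each costing a scale gap of order $\log\Theta$ to beat an error proportional to the current contrast times an exponentially small factor) matches the shape of the paper's count, but the engine you propose for the contraction is precisely the step that is not available, and it is not how the paper argues. You posit a one-step improvement of the form $J_{m+k}\le\tfrac12 J_m + C\,\Theta_m 3^{-ck}$, to be proved ``by independence across well-separated subcubes plus a Caccioppoli/energy estimate at the coarse-grained level'' with constants independent of the contrast. All known proofs of such a multiplicative contraction of the subadditivity defect (the iterations of~\cite{AS,AKM1,AKMbook}) carry constants that degrade with the ellipticity ratio---this is exactly why the classical route only gives $\exp(C\Theta^{\nicefrac12}\log\Theta)$---and you supply no mechanism removing that dependence; flagging it as ``the heart of the matter'' leaves the heart unproved, so the proposal has a genuine gap at its central step.

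The paper never proves a defect contraction at a generic scale. Instead it exploits the monotonicity of the annealed matrices $n\mapsto\ahom(\cu_n)$ and $n\mapsto\ahom_*^{-1}(\cu_n)$ together with a dyadic pigeonhole (Lemma~\ref{l.pigeon}): over $N$ scales, either the annealed contrast $\Theta_n=\ahom(\cu_n)\ahom_*^{-1}(\cu_n)$ has already dropped by the desired factor, or there is a run of $h\sim\xi\log\tilde\Theta_0$ consecutive scales across which both annealed quantities change by at most a factor $1+\delta$. The hard step (Lemma~\ref{l.toss.J}) shows that such flatness forces the contrast at the top of the run to collapse to $1+C\xi\delta^{\nicefrac14}\Theta_0$: one uses the identity $\E[\widetilde J(\cu_m,p,q)]=\frac12 p\cdot(\ahom(\cu_m)\ahom_*^{-1}(\cu_m)-\Id)q$, bounds $\E[J(\cu_m,p,q)]-\frac12 p_0\cdot q_0$ by the expected additivity defect $\tau$ (small by flatness) plus negative-Besov norms of $\nabla v-p_0$ and $\a\nabla v-q_0$ for the maximizer $v$ (Lemma~\ref{l.J.upperbound}), and then controls those weak norms by further additivity defects plus fluctuations of $\a_*^{-1}(z+\cu_j)$ and $\a(z+\cu_j)$ around their means, handled by Rosenthal-type concentration under the finite range of dependence (Lemmas~\ref{l.weaknorms} and~\ref{l.lambda.bounds}); the requirement $m\gtrsim\xi\log\tilde\Theta_0$ is what beats the $3^{-cm}\tilde\Theta_0$ concentration errors, which is the rigorous counterpart of your $C\Theta_m3^{-ck}$ term. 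Iterating this one-factor contraction of $\Theta$ about $\log\Theta$ times then yields the $\log^2\Theta$ bound. So either you must actually prove your uniform-in-$\Theta$ one-step contraction (which would be a new and stronger result than the paper's) or replace it by a selection-of-good-scales mechanism of this kind; as written, the argument does not close.
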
 

See~\cite{AK.HC} for a more precise statement of the theorem. A complete, self-contained proof of a special case of Theorem~\ref{t.AK.HC} is presented in Section~\ref{s.HC}. 

\smallskip

The bound in Theorem~\ref{t.AK.HC} is not expected to be sharp, and it is an important open problem to obtain an estimate which is a polynomial in~$\Theta$. 

\begin{problem}[Polyomial upper bound]
In the case the coefficient field~$\a(\cdot)$ satisfies a finite range of dependence, prove that the homogenization length scale is at most~$C \Theta^{C}$ for some~$C(d)<\infty$. 
\end{problem}

At a high level, the proof of Theorem~\ref{t.AK.HC} combines a scale-local closure at the coarse-grained level with a selection of good scales (via a dyadic pigeonholing argument) and the properties of the coarse-grained diffusion matrices to show that the coarse-grained ellipticity contrast must contract by a factor of~$\nf12$ after~$C \log \Theta$ many scales. We then iterate this result~$C \log \Theta$ times to obtain the theorem. See below in Section~\ref{s.HC} for more details. 

\smallskip

Since the 1970s, what we call the high-contrast homogenization problem has been studied in the physics literature through \emph{random resistor networks}. Equip the lattice~$\Zd$ with nearest-neighbor edges, and independently assign each edge one of two finite conductances~$\sigma_{\mathrm{m}}$ and~$\sigma_{\mathrm{d}}$ with~$\sigma_{\mathrm{m}}\gg \sigma_{\mathrm{d}}$. Denote the contrast by~$h\coloneqq \sigma_{\mathrm{d}}/\sigma_{\mathrm{m}}$, let~$p$ be the probability of an edge having conductance~$\sigma_{\mathrm{m}}$. We are interested in the case in which~$h$ is very small and~$p$ is tuned near the percolation threshold~$p_c$. A key insight, emphasized in the review~\cite{clerc1990binary} and already present in early works \cite{efros1976critical,bergman1977dielectric,straley1976critical,straley1977position,straley1977critical}, is that bond percolation and the resistor network can be treated jointly as a two-variable critical system with relevant coordinates~$(p-p_c,h)$. In this view, the point~$h=0$ and~$p=p_c$ is an isolated critical point with two relevant directions, exactly as at the Curie point of a ferromagnet where temperature~$T-T_c$ and magnetic field~$H$ are the natural variables (the same letter~$h$ is used in the resistor-network literature to stress this analogy, see~\cite{clerc1990binary}). 
We stress that this discrete model is directly analogous to our continuum setup---and our arguments can be adapted to the discrete setting with essentially minor notational changes. 
Note that~$h$ is related to the parameter~$\Theta$ we call the ellipticity contrast via~$\Theta = h^{-1}$. 

\smallskip

This two-variable perspective leads to a scaling ansatz for the effective conductivity,
\begin{equation}
\sigma_{\mathrm{eff}} \asymp
\sigma_{\mathrm{m}}
|p-p_c|^{t}
\Phi_{\pm}
\bigl( h |p-p_c|^{-(s+t)} \bigr)
\,,\qquad p > p_c\,,
\label{e.physics.prediction}
\end{equation}
with an analogous form for the range~$p<p_c$, where~$s$ and~$t$ are the conductivity exponents respectively for~$p<p_c$ and~$p>p_c$,  and~$\Phi_{\pm}$ are universal scaling functions depending only on dimension~\cite{straley1976critical,clerc1990binary}. The ansatz~\eqref{e.physics.prediction} predicts a regime~$h\lesssim |p-p_c|^{s+t}$ in which the smallness of~$h$ dictates the behavior (this is our high-contrast homogenization problem) and another regime~$h\gtrsim |p-p_c|^{s+t}$ in which it is instead the geometry of the percolation cluster that controls the effective diffusivity. The term ``crossover'' refers to the switch between these two regimes as~$h|p-p_c|^{-(s+t)}$ passes through order one \cite{clerc1990binary,harris1987diluted,harris1987potts,stenull1999critical}. The exponents~$s$ and~$t$ admit definite numerical predictions in low dimensions and mean-field values in high dimension; see \cite{straley1977critical,clerc1990binary} for representative estimates and discussion.

\smallskip

Combining the crossover condition~$h|p-p_c|^{-(s+t)}\asymp 1$ with the predicted scaling (see~\cite{kirkpatrick1973percolation}) of the percolation correlation length~$\xi$ near the critical point, 
\begin{equation}
\xi\asymp |p-p_c|^{-\nu}
\,,
\label{e.percolation.correlation}
\end{equation}
yields a length scale
\begin{equation}
\xi_h \asymp h^{-\nu/(s+t)}
\,,
\label{e.homogenization.crossover}
\end{equation}
which is the physics prediction for the homogenization length scale for~$h>0$ and~$p=p_c$, see~\cite{clerc1990binary,harris1987diluted,harris1987potts,stenull1999critical}. For general~$p$, the macroscopic length controlling effective behavior is $\min\{\xi,\xi_h\}$. 
Note that~$\xi_h$ is predicted to diverge as~$h\to 0$ in all dimensions. For~$d>6$, the exponent~$\nu/(s+t)$ is predicted to be equal to~$\nf16$. Numerical evidence suggests a value of approximately~$0.513$ in~$d=2$ and~$0.33$ in~$d=3$; see~\cite{clerc1990binary}.

\smallskip

In the continuum setting, our high-contrast result yields a quantitative upper bound on the homogenization length in terms of contrast alone. Translating to the resistor-network parameter~$h=\Theta^{-1}$, we obtain
\begin{equation}
\xi_h \lesssim \exp\bigl(C\left|\log h\right|^2\bigr)
\,.
\label{e.AK.HC.xi.h}
\end{equation}
For comparison, the best general rigorous bounds for the percolation correlation length on $\mathbb{Z}^d$ currently give (see \cite{DKT}) the exponential-type bound
\begin{equation}
\xi \lesssim \exp\bigl(C|p-p_c|^{-2}\bigr)
\,.
\label{e.DKT}
\end{equation}
It is natural to wonder whether an estimate like~\eqref{e.AK.HC.xi.h} can be used to get information about~$\xi$. 
Heuristically, the crossover relation~$h|p-p_c|^{-(s+t)}\asymp 1$ links~$\xi_h$ and~$\xi$; thus, a quantitative crossover theorem (even in a suboptimal form) could, in principle, transmit estimates of the type \eqref{e.AK.HC.xi.h} into near-critical percolation. Establishing such a bridge is an interesting open problem.

\begin{problem}
Establish a quantitative version of one side of the predicted crossover relation by showing that, for~$h \lesssim f( |p-p_c| )$ for some reasonable (hopefully powerlike) function~$f$, the macroscopic length controlling effective behavior of the random resistor network is~$\xi_h$.
\end{problem}

\section{Coarse-graining theory}

In any block-averaging scheme one must decide what, exactly, is being averaged. For a divergence-form elliptic operator, the coefficient field~$\a(x)$ is not itself an observable quantity, as ``diffusivity'' is defined only through the constitutive relation between flux and gradient and must therefore be inferred from the response of the medium to imposed loads. Averaging~$\a(x)$ directly is therefore not the natural macroscopic operation. What is meaningful is the \emph{response map} that sends block-averaged gradients to the corresponding block-averaged flux. 

\smallskip

Ideally, for each block~$\cu \subseteq\Rd$ we would like to find a matrix~$\a(\cu)$ satisfying 
\begin{equation}
\a(\cu) \fint_{\cu} \nabla u(x) \,dx 
=
\fint_{\cu} \a(x) \nabla u(x) \,dx\,,
\qquad 
\forall u \in \mathcal{A}(\cu)\,,
\label{e.too.much.to.hope.for}
\end{equation}
where~$\mathcal{A}(\cu)$ is denotes the linear space of~$\a$-harmonic functions in~$\cu$. 
Clearly we cannot expect a single~$d$-by-$d$ matrix~$\a(\cu)$ to satisfy~\eqref{e.too.much.to.hope.for} for every~$\a$-harmonic function~$u$.
Instead, we isolate two natural~$d$-dimensional subspaces of~$\mathcal{A}(\cu)$---the solutions of affine Dirichlet and Neumann problems---and define block matrices~$\a(\cu)$ and~$\a_*(\cu)$, respectively, so that the identity~\eqref{e.too.much.to.hope.for} holds on these subspaces. 

\smallskip

This pair of matrices turns out to be very convenient for building a coarse-graining theory. First,~$\a(\cu)$ and~$\a_*^{-1}(\cu)$ are \emph{subadditive} and  \emph{local} (they depend only on~$\a\vert_\cu$), which makes it convenient to apply mixing conditions.  
Second, the gap~$\a(\cu) - \a_*(\cu)$ is nonnegative and controls the defect of the block response for \emph{arbitrary}~$\a$-harmonic function~$u$, providing quantitative error bounds.
Finally, the pair carries enough quantitative information to define a scale-dependent notion of ellipticity (coarse-grained ellipticity), which will substitute for uniform ellipticity in large-scale estimates.

\smallskip

In this section we will review the definitions and basic properties of the coarse-grained matrices, the notion of coarse-grained ellipticity and explore some generalizations of elliptic and functional inequalities. The presentation here is an abbreviated version of what appears in~\cite{AK.HC} or~\cite{AK.Book}, and these can be consulted for further discussion and the details of some arguments. 
We stress that the presentation in this section is purely deterministic: no randomness of the field is needed to define the coarse-grained objects. 

\smallskip

To keep our presentation simple, we restrict our presentation here to the case of coefficient fields~$\a(x)$ valued in the \emph{symmetric} matrices. We stress that this restriction is merely a pedagogical choice. The coarse-graining theory developed in~\cite{AK.HC}, including the proof of Theorem~\ref{t.AK.HC}, applies to general (not necessarily symmetric) fields.

\subsection{Notation for functional spaces}

We list some basic notation used throughout the article. Any notation which is unclear to the reader and not mentioned here can be found in~\cite[Section 1.5]{AK.HC}. 
For each open~$U\subseteq\Rd$,  we let~$\Omega(U)$ denote the set of possible coefficient fields, defined by
\begin{equation}
\Omega(U)
\coloneqq 
\bigl\{ \a : U \to \R^{d\times d}_{\sym,+} \,:\,
\a,\, \a^{-1} 
\in L^1_{\mathrm{loc}}(U;\R^{d\times d})
\bigr\}
\,.
\label{e.Omega.U.def}
\end{equation}
Here~$\R^{d\times d}_{\sym,+}$ is the set of~$d$-by-$d$ matrices with real entries which are symmetric and positive definite. For each~$U \subseteq\Rd$ and~$\a\in \Omega(U)$, we define the function spaces~$H^1_\a(U)$ as the completion of~$C^\infty(U)$ with respect to the norm
\begin{equation}
\label{e.H1a}
\big\| u \bigr\|_{H^1_\a(U)} \coloneqq \Bigl( \| u \|_{L^1(U)}^2 + \int_U \nabla u \cdot \a \nabla u\Bigr)^{\nicefrac12} \,.
\end{equation}
Observe that, by H\"older's inequality, we have 
\begin{equation}
\label{e.Omega.imp}
\a \in \Omega(U),  \ u\in H^1_\a(U)
\implies 
\nabla u , \,\a\nabla u \in L^1_{\mathrm{loc}}(U)
\,.
\end{equation}
The space~$H^1_\a(U)$ is a complete Hilbert space for every open~$U\subseteq\Rd$ and~$\a\in\Omega(U)$. It is clear that~$C^\infty_c (U) \subseteq H^1_{\a}(U)$. We also define~$H^1_{\a,0}(U)$ as the closure of~$C^\infty_c (U)$ with respect to~$\| \cdot \|_{H^1_\a(U)}$.
For each bounded domain~$U \subseteq\Rd$, we let~$\mathcal{A}(U)$ denote the set of~$\a$-harmonic functions in~$U$, 
\begin{equation*}
\mathcal{A}(U)
\coloneqq 
\bigl\{ u \in H^1_\a(U)\,:\, \nabla \cdot \a\nabla u = 0 \bigr\}
\,.
\end{equation*}
We denote affine functions by~$\ell_p(x)\coloneqq p\cdot x$.

We denote averages using a slash through the sum symbol; if~$S$ is a finite set, then 
\begin{equation*}
\avsum_{i\in S} a_i 
\coloneqq
\frac{1}{|S|} \sum_{i\in S} a_i
\,.
\end{equation*}
We work with triadic cubes: for each~$n\in\Z$, we define
\begin{equation*}
\cu_n \coloneqq \Bigl( -\frac12 3^n , \frac12 3^n \Bigr)^{d} 
\,.
\end{equation*}
For every~$m,n\in\Z$ with~$m>n$ and~$z\in 3^m\Zd$, the collection~$\{ z' + \cu_n \,:\, z' \in 3^n\Zd \cap (z+\cu_m) \}$ is a partition (up to a Lebesgue null set) of the cube~$z+\cu_m$.  

We use slashes and underlines to denote volume-normalization. 
Volume-normalized integrals and~$L^p$ norms are denoted, for~$p\in[1,\infty)$, by
\begin{equation}
\label{e.volume.normalize}
(f)_U  \coloneqq  
\fint_U f(x) \,dx  \coloneqq  \frac{1}{|U|} \int_U f(x)\,dx
\qquad \mbox{and} \qquad 
\| f \|_{\underline{L}^p(U)} \coloneqq  \Bigl( \fint_U |f(x)|^p \,dx \Bigr)^{\nicefrac1p}
\,.
\end{equation}
We next introduce our notation for Besov spaces. 
For each~$s \in (0,1)$, $p\in [1,\infty)$, $q\in (0,\infty)$ and~$n\in \N$, we define a (volume-normalized) Besov seminorm in the cube~$\cu_n$ by
\begin{equation}
\label{e.Bs.seminorm}
\left[ g \right]_{\underline{B}_{p,q}^{s}(\cu_{n})}
 \coloneqq 
\Biggl( 
\sum_{k=-\infty}^n
3^{- s q k} \biggl( 
\avsum_{z\in 3^{k-1}\Zd, \, z + \cu_k \subseteq \cu_n}
\bigl \| g - (g)_{z+\cu_k}\bigr \|_{\underline{L}^p(z+\cu_k)}^p
\biggr)^{\!\nicefrac qp}
\Biggr)^{\! \nicefrac1q}
\,.
\end{equation}
Extensions to~$p=\infty$ or~$q=\infty$ are defined in the usual way. 
The dual seminorms of negative regularity are defined as follows. For every~$s \in (0,1]$,~$p\in [1,\infty]$ and~$q\in [1,\infty]$, we 
let $p'$ and~$q'$ denote the H\"older conjugate exponents of~$p$ and~$q$, respectively, and we define
\begin{equation}
\label{e.Bs.minus.seminorm}
\left[ f \right]_{\Bhatminusul{-s}{p}{q} (\cu_{n})}
 \coloneqq 
\sup \biggl\{ \fint_{\cu_n} f g \, : \, g \in B_{p',q'}^{s}(\cu_{n}) \,, \; 
\left\|  g \right\|_{\underline{B}_{p',q'}^{s}(\cu_{n})} \leq 1 \biggr\}
\,.
\end{equation}
The dual space of the subspace of~$B_{p',q'}^{s}(\cu_n)$ with zero boundary values is defined by 
\begin{equation}
\label{e.Bs.minus.seminorm.zero}
\left[ f \right]_{\underline{B}_{p,q}^{-s}(\cu_{n})}
 \coloneqq 
\sup \biggl\{ \fint_{\cu_n} f g \, : \, g \in C_{\mathrm{c}}^\infty(\cu_n) \,, \; 
\left[  g \right]_{\underline{B}_{p',q'}^{s}(\cu_{n})} \leq 1 \biggr\}
\,.
\end{equation}
Finally, we introduce another variant of these negative spaces by defining
\begin{equation}
\label{e.Bs.minus.seminorm.explicit}
\left[ f \right]_{\Besov{-s}{p}{q}(\cu_n)}
 \coloneqq 
\biggl(
\sum_{k=-\infty}^n
3^{s qk}
\biggl(
\avsum_{z\in 3^k\Zd \cap \cu_n}
\bigl| (f)_{z+\cu_k}\bigr |^p
\biggr)^{\!\nicefrac qp}
\biggr)^{\! \nicefrac1q}
\,.
\end{equation}
We will write~$f \in \Besovnoul{-s}{p}{q}(\cu_n)$ if the quantity in~\eqref{e.Bs.minus.seminorm.explicit} is finite. Note that~$f\geq 0$ and~$f \in \Besovnoul{-s}{p}{q}(\cu_n)$ implies~$f\in L^1(\cu_n)$. 
The definition~\eqref{e.Bs.minus.seminorm.explicit} will be useful when estimating the negative seminorms since, for every~$f$, 
\begin{equation}
\label{e.weak.norms.ordering}
\left[ f \right]_{\underline{B}_{p,q}^{-s}(\cu_{n})}
\leq
\left[ f \right]_{\Bhatminusul{-s}{p}{q} (\cu_{n})}
\leq
3^{d+s}
\left[ f \right]_{\Besov{-s}{p}{q}(\cu_n)}
\,.
\end{equation}
See~\cite{AK.HC}. In particular, we have the ``duality pairing'' 
\begin{equation}
\label{e.duality.Bs}
\biggl| \fint_{\cu_m} f g \biggr| 
\leq
C
\| g  \|_{\Besovul{s}{p'}{q'}(\cu_m)}
 \left[ f \right]_{\Besov{-s}{p}{q}(\cu_{m})}
\,.
\end{equation}
We will need two other straightforward facts which are proved in the appendix of~\cite{AK.HC}. First, there exists~$C(d)<\infty$ such that, for every~$m \in \Z$,~$s \in [0,1)$ and~$u \in B^{s}_{2,\infty}(\cu_{m})$,
\begin{equation}  
\label{e.divcurl.est0}
\left\| u- (u)_{\cu_m}  \right\|_{\underline{B}_{2,\infty}^{s}(\cu_m)}
\leq 
C
\left[ \nabla u  \right]_{\Besov{s-1}{2}{1}(\cu_{m})}
\,.
\end{equation}
More generally, if~$\varphi \in W^{2,\infty}(\cu_m)$, then
\begin{equation}
\label{e.divcurl.est1}
\left\| (u- (u)_{\cu_m}) \nabla \varphi \right\|_{\underline{B}_{2,\infty}^{s}(\cu_m)}
\leq 
C
3^{m}
 \| \nabla \varphi\|_{\underline{W}^{1,\infty}(\cu_m)}
\left[ \nabla u  \right]_{\Besov{s-1}{2}{1}(\cu_{m})}
\,.
\end{equation}

\subsection{Coarse-grained matrices}

The well-posedness of boundary-value problems in bounded domains~$U$ for arbitrary~$\a\in \Omega(U)$ follows from Lax-Milgram; see \cite[Section 2.1]{AK.HC}. With this in mind, we define, for each bounded domain~$U \subseteq\Rd$ and~$\a \in \Omega(U)$, the \emph{coarse-grained matrices}~$\a(U)$ and~$\a^{-1}_*(U)$ by 
\begin{equation}
\left\{
\begin{aligned}
& 
\frac12 p \cdot \a(U) p 
\coloneqq 
\min_{w \in \ell_p + H^1_{\a,0}(U)} 
\fint_U 
\frac12 \nabla w \cdot \a\nabla w \,, \quad \forall p\in\Rd\,, \\
& 
\frac12 q \cdot \a_*^{-1} (U) q 
\coloneqq 
\max_{w \in H^1_{\a}(U)} 
\fint_U 
\Bigl( q \cdot \nabla w 
-
\frac12 \nabla w \cdot \a\nabla w 
\Bigr) \,,  \quad \forall q\in\Rd\,. 
\end{aligned}
\right.
\label{e.a.astar.def}
\end{equation}
The quantity~$\a(U)$ is classical in the theory of homogenization. In fact, it was introduced in one of the first mathematical papers on homogenization in the work of De Giorgi and Spagnolo~\cite{DGS}. 
To  our knowledge, the second quantity was not put to use until the work of the first author and Smart~\cite{AS} where it was introduced, in the context of quantitative homogenization, and used to implement (what we would now call) a coarse-graining scheme. 

\smallskip

There is a convenient variational formulation that combines both of the coarse-grained matrices in a single expression: we define, for each~$p,q\in\Rd$,  
\begin{equation}
J(U,p,q) 
\coloneqq 
\max_{w \in \A(U)} 
\fint_{U} 
\Bigl( 
-\frac12 \nabla w \cdot \a \nabla w 
+ q \cdot \nabla w - p\cdot \a\nabla w \Bigr) 
\,.
\label{e.J.def}
\end{equation}
Note that the maximum in~\eqref{e.J.def} is over the set of~$\a$-harmonic functions in $U$. 
By performing some computations, one finds that
\begin{equation}
J(U,p,q) 
= 
\frac12 p \cdot \a(U) p 
+
\frac12 q \cdot \a_*^{-1} (U) q 
-p\cdot q
\,.
\label{e.J.a.astar}
\end{equation}
The maximizer~$w$ in~\eqref{e.J.def} is unique up to additive constants, and we denote it by~$v(\cdot,U,p,q)$. By straightforward manipulations, we find that~$(p,q) \mapsto  v(\cdot,U,p,q)$ is linear, that~$v(\cdot,U,p,0)$ is the solution of the Dirichlet problem with affine data~$\ell_{-p}$ and~$v(\cdot,U,0,q)$ is the solution of the Neumann problem with prescribed boundary flux~$\mathbf{n}\cdot q$. 

\smallskip

The variational definition of the quantity~$J$ makes it subadditive by definition (test the large scale maximizer in the definitions of the subcube partition). In particular, for every~$m,n\in\Z$ with~$n<m$, 
\begin{equation}
J(\cu_m,p,q) 
\leq 
\avsum_{z\in 3^n\Zd\cap \cu_m} 
J(z+\cu_n,p,q)\,.
\label{e.subadditivity}
\end{equation}
It is immediate that~$J(U,p,q)$ is local in the sense that it depends on the field~$\a(x)$ only via~$\a\vert_U$. 

\smallskip

It is clear from its definition that~$J$ is nonnegative (test with the zero function), and we deduce from this and~\eqref{e.J.a.astar} the ordering
\begin{equation}
\a_*(U) \leq \a(U)
\,.
\label{e.ordering}
\end{equation}
By testing~\eqref{e.a.astar.def} with affine functions, we find that the coarse-grained quantities also satisfy the integral bounds
\begin{equation}
\a(U) 
\leq 
\fint_U \a(x)\,dx 
\qquad \mbox{and} \qquad
\a_*^{-1}(U) 
\leq 
\fint_U \a^{-1}(x)\,dx 
\,.
\label{e.cg.integral.bounds}
\end{equation}
By computing the first variation of~\eqref{e.J.def}, we obtain the identity
\begin{equation}
q \cdot \fint_U \nabla w 
- p \cdot \fint_U \a \nabla w 
=
\fint_U 
\nabla w 
\cdot \a \nabla v(\cdot,U,p,q)
\,, \quad \forall w \in \A(U)\,.
\label{e.J.first.var}
\end{equation}
Hence, for every~$w \in \A(U)$, 
\begin{equation}
J(U,p,q) - \fint_U \frac12 \nabla w \cdot \a\nabla w 
=
\fint_U \frac12 (\nabla v(\cdot,U,p,q) - \nabla w)  \cdot \a (\nabla v(\cdot,U,p,q) - \nabla w)
\,.
\label{e.J.second.var}
\end{equation}
In particular, 
\begin{equation}
J(U,p,q) 
=
\fint_U \frac12 \nabla v(\cdot,U,p,q) \cdot \a \nabla v(\cdot,U,p,q)
\,.
\label{e.J.energy.of.v}
\end{equation}
As promised in our discussion above around~\eqref{e.too.much.to.hope.for}, one can obtain from these identities that
\begin{equation}
\left\{
\begin{aligned}
& 
\fint_{U} \a\nabla v(\cdot,U,p,0) = \a(U) \fint_U \nabla v(\cdot,U,p,0)  \,, 
\\
& 
\fint_{U} \a\nabla v(\cdot,U,0,q) = \a_*(U) \fint_U \nabla v(\cdot,U,0,q) \,.
\end{aligned}
\right.
\label{e.exact.response}
\end{equation}
We next show how~$\a(U)$ and~$\a_*(U)$ give us information about arbitrary~$\a$-harmonic functions. 
According to~\eqref{e.J.first.var},~\eqref{e.J.energy.of.v} and Cauchy-Schwarz, for any~$w\in\A(U)$ and~$p,q\in\Rd$, 
\begin{equation}
\label{e.fluxmaps}
\biggl | \fint_{U} \bigl ( p \cdot \a \nabla w - q \cdot \nabla w \bigr ) \biggr |
=
\biggl | \fint_U \nabla w 
\cdot  \a \nabla v\bigl (\cdot, U, p,q \bigr )  \biggr |
\leq
(2J \bigl (U, p,q \bigr ) )^{\nicefrac12}
\Bigl( \fint_U \nabla w \cdot \a \nabla w \Bigr)^{\!\nicefrac12}
\,.
\end{equation}
Plugging in~$q = \a_*(U)p$, and observing from~\eqref{e.J.a.astar} that
\begin{equation*}
J \bigl (U, p,\a_*(U)p \bigr ) = \frac12 p\cdot (\a(U) - \a_*(U) ) p 
\,,
\end{equation*}
we obtain
\begin{equation*}
\biggl| p\cdot  \Bigl( \fint_{U} \a \nabla w - \a_*(U) \fint_{U} \nabla w \Bigr ) \biggr|
\leq
(p\cdot (\a(U) - \a_*(U) ) p  )^{\nicefrac12}
\Bigl( \fint_U \nabla w \cdot \a \nabla w \Bigr)^{\!\nicefrac12}
\,.
\end{equation*}
Taking the maximum over~$|p|=1$ yields 
\begin{equation}
\biggl|\fint_{U} \a \nabla w - \a_*(U) \fint_{U} \nabla w \biggr|
\leq
\bigl| \a(U) - \a_*(U) \bigr|^{\nicefrac12}
\Bigl( \fint_U \nabla w \cdot \a \nabla w \Bigr)^{\!\nicefrac12}
\,,
\label{e.response.map}
\end{equation}
where~$|A|$ denotes the spectral norm of a matrix. 
This simple inequality~\eqref{e.response.map} is a very powerful tool in the coarse-graining theory. 

\smallskip

The identities~\eqref{e.exact.response} say that the block response is exact for~$\a(U)$ and~$\a_*(U)$, respectively, on the subspaces~$\{ v(\cdot,U,p,0) \,:\, p\in\Rd\}$ and~$\{ v(\cdot,U,0,q) \,:\, q\in\Rd\}$; meanwhile, the inequality~\eqref{e.response.map} says that the block responses are stable elsewhere, with an error proportional to (the square root of) the gap~$\a(U) - \a_*(U)$. This gap, which we can consider as a \emph{coarse-graining defect}, is therefore a primary quantity of interest.   

\smallskip

We also have the following key inequalities, which demonstrate that the energy of an~$\a$-harmonic function controls the block energy of its block gradient and flux:
\begin{equation}
\label{e.energymaps}
\frac12\Bigl( \fint_U \nabla w \Bigr) \cdot \a_*(U) \Bigl( \fint_U \nabla w \Bigr)
\leq
\fint_U \frac12 \nabla w \cdot \a\nabla w 
\,, \quad 
\forall w\in H^1_{\a} (U)\,. 
\end{equation}
Similarly, the coarse-grained matrix~$\a(U)$ gives a lower bound for the spatial average of the flux of an arbitrary solution in terms of its energy: 
\begin{align}
\label{e.energymaps.flux}
\frac12\Bigl( \fint_U \a \nabla w \Bigr) \cdot \a^{-1} (U) \Bigl( \fint_U \a \nabla w \Bigr)
\leq
\fint_U \frac12 \nabla w \cdot \a\nabla w 
\,, \quad 
\forall w\in \mathcal{A}(U)\,.
\end{align}

\subsection{Coarse-grained ellipticity}

The block matrices above describe response on a single region via block averages, but macroscopic control requires testing against smooth functions across all scales; that is, we need quantitative information in weak (negative-regularity) topologies. We therefore define several multiscale, auxiliary quantities: for each triadic cube~$\cu$ we record the matrices~$\a(\cu)$,~$\a_*^{-1}(\cu)$ as well as the gap~$\a(\cu)-\a_*(\cu)$ and the deviation~$\a(\cu)-\ahom$, and we aggregate these over all scales with finer scales discounted. The first pair defines coarse-grained ellipticity constants that play the roles of~$\Lambda$ and~$\lambda^{-1}$ at a given observation scale; the gap and the deviation quantify, at that scale, the defect of the block response and the distance to the effective operator~$-\nabla\cdot\ahom\nabla$, respectively. These local, iterable parameters are the basis for coarse-grained elliptic and functional inequalities that drive a scale-by-scale analysis of~$-\nabla\cdot\a\nabla$.

\begin{definition}[Coarse-grained ellipticity constants]
\label{def.cg.ellipticity}
For every~$s,t\in (0,\infty)$,~$q\in [1,\infty)$,~$m\in\Z$ and coefficient field~$\a \in \Omega(\cu_m)$, we set~$\css{u} := 1-3^{-u}$ and define the multiscale composite quantities
\begin{equation}
\label{e.coarse.grained.ellipticity}
\left\{
\begin{aligned}
& {\Lambda}_{s,q}(\cu_m)
 \coloneqq  
\biggl( 
\css{sq} \sum_{k=-\infty}^{m} 
3^{-sq(m-k)} 
\max_{z\in 3^k\Zd \cap \cu_m} 
\bigl| \a(z+\cu_k) \bigr|^{\nf q2} 
\biggr)^{\!\nf{2}{q}}
\,, \\  &
{\lambda}_{t,q}(\cu_m) 
 \coloneqq 
\biggl(\css{tq} \sum_{k=-\infty}^{m} 
3^{-tq(m-k)} 
\max_{z\in 3^k\Zd \cap \cu_m} 
\bigl| \a_{*}^{-1}(z+\cu_k) \bigr|^{\nf q2}
\biggl)^{\!- \nf{2}{q}}
\,.
\end{aligned}
\right.
\end{equation}
For~$q=\infty$, the sum above is replaced by the supremum over the scale parameter~$k$ and~$\css{\infty} =1$.
We say that a field~$\a \in\Omega(\cu_m) $ is \emph{coarse-grained elliptic in~$\cu_m$} if there exists~$s,t<1$ with~$s+t<1$ such that~$\Lambda_{s,1}(\cu_m)<\infty$ and~$\lambda_{t,1}(\cu_m)>0$.
\end{definition}

Note that we have the trivial ordering~$\Lambda_{s,\infty}(\cu_m) \leq  \Lambda_{s,1}(\cu_m)$ since~$k \mapsto \max_{z\in 3^k\Zd \cap \cu_m} 
\bigl| \a(z+\cu_k) \bigr|$ is nonincreasing due to subadditivity. Similarly,~$\lambda_{t,\infty}^{-1}(\cu_m) \leq \lambda_{t,1}^{-1}(\cu_m)$. 

\smallskip

The definitions in~\eqref{e.coarse.grained.ellipticity} can be compared to the negative regularity Besov-type norm in~\eqref{e.Bs.minus.seminorm.explicit}. In fact, it is immediate from the integral bounds in~\eqref{e.cg.integral.bounds} and the definitions~\eqref{e.coarse.grained.ellipticity} that 
\begin{equation*}
\Lambda_{s,q}(\cu_m) 
\leq 
\css{sq}^{\nf2q} 3^{-2sm} 
[ \a ]_{\Besov{-2s}{\infty}{\nf q2}(\cu_m)} 
\quad \mbox{and} \quad 
\lambda_{t,q}^{-1} (\cu_m) 
\leq 
\css{tq}^{\nf2q} 3^{-2tm} 
[ \a^{-1} ]_{\Besov{-2t}{\infty}{\nf q2}(\cu_m)} 
\,.
\end{equation*}
In particular, if~$s+t<1$, then we have the implication
\begin{equation}
\a \in \Besovnoul{-2s}{\infty}{\nf 12}(\cu_m) 
\ \mbox{and} \ 
\a^{-1} \in \Besovnoul{-2t}{\infty}{\nf 12} (\cu_m) 
\  \implies \ \mbox{$\a(\cdot)$ is coarse-grained elliptic in~$\cu_m$.}
\label{e.negative.reg.imp}
\end{equation}
Coarse-grained ellipticity should be seen as a \emph{negative regularity} type condition on the coefficient field and its inverse. 

\smallskip

If~$\a$ satisfies the uniform ellipticity condition~\eqref{e.UE}, then~$H^1_\a(\cu_m) = H^1(\cu_m)$ and the Poincar\'e inequality can be written in terms of the~$H^1_\a(\cu_m)$ seminorm as 
\begin{equation}
\| u - (u)_{\cu_m} \|_{\underline{L}^2(\cu_m)}^2 
\leq 
C \lambda^{-1} 3^{2m} 
\| \a^{\nf12} \nabla u \|_{\underline{L}^2(\cu_m)}^2 
\,.
\label{e.classical.Poincare}
\end{equation}
This follows immediately from the classical Poincar\'e in the cube~$\cu_m$ and~$\lambda^{-1} \a \leq \Id$. 
The properties of the coarse-grained matrices allow us to  generalize~\eqref{e.classical.Poincare} from uniform to coarse-grained ellipticity. In fact, we will obtain the embedding~$\mathcal{A}(\cu_m) \hookrightarrow
B^{1-s}_{2,\infty}(\cu_m)$ under the condition~$\lambda_{s,1}(\cu_m) > 0$. 

\begin{proposition}[Coarse-grained Poincar\'e inequality]
\label{p.cg.Poincare}
For every~$s,t \in (0,1]$,~$q\in [1,\infty]$,~$m \in \N$ and~$u \in \mathcal{A}(\cu_m)$,
\begin{equation}
\label{e.cg.Poincare}
\left\{
\begin{aligned}
& 3^{-sm} \left[ \nabla u  \right]_{\Besov{-s}{2}{q}(\cu_m)}
\leq 
\css{sq}^{-\nf1q} 
\lambda_{s,q}^{-\nf 12}(\cu_m) \| \a^{\nf 12} \nabla u \|_{\underline{L}^2(\cu_m)}
\,,
\\ & 
 3^{-sm} \left[ \a \nabla u  \right]_{\Besov{-s}{2}{q}(\cu_m)}
\leq
\css{sq}^{-\nf1q} 
\Lambda_{s,q}^{\nf 12}(\cu_m) \| \a^{\nf 12} \nabla u \|_{\underline{L}^2(\cu_m)}
\,.
\end{aligned}
\right.
\end{equation}
\end{proposition}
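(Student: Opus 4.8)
The plan is to prove both inequalities in~\eqref{e.cg.Poincare} by a direct scale-by-scale expansion of the negative Besov seminorm in~\eqref{e.Bs.minus.seminorm.explicit}, using the energy estimates~\eqref{e.energymaps} and~\eqref{e.energymaps.flux} at each triadic subcube. Fix~$u \in \mathcal A(\cu_m)$. For the first inequality, I would write out
\[
\left[ \nabla u \right]_{\Besov{-s}{2}{q}(\cu_m)}^q
=
\sum_{k=-\infty}^m 3^{sqk}
\biggl( \avsum_{z \in 3^k\Zd \cap \cu_m} \bigl| (\nabla u)_{z+\cu_k} \bigr|^2 \biggr)^{\!\nf q2},
\]
and then bound, for each fixed scale~$k$ and each subcube~$z+\cu_k$, the quantity~$\bigl| (\nabla u)_{z+\cu_k} \bigr|^2$ by~$2\,\| \a^{\nf12} \nabla u \|_{\underline L^2(z+\cu_k)}^2$ times the spectral norm of~$\a_*^{-1}(z+\cu_k)$: this is exactly~\eqref{e.energymaps} applied on~$z+\cu_k$ (note~$u\vert_{z+\cu_k} \in H^1_\a(z+\cu_k)$, which is all that estimate requires). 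Then I replace~$\bigl| \a_*^{-1}(z+\cu_k)\bigr|$ by its maximum over~$z \in 3^k\Zd \cap \cu_m$, pull that factor out of the average over~$z$, and recognize the remaining sum~$\avsum_z \| \a^{\nf12}\nabla u\|_{\underline L^2(z+\cu_k)}^2$ as~$\| \a^{\nf12}\nabla u\|_{\underline L^2(\cu_m)}^2$ (exact, since the subcubes of scale~$k$ partition~$\cu_m$ up to a null set). This reduces the whole sum to
\[
\| \a^{\nf12}\nabla u\|_{\underline L^2(\cu_m)}^q
\sum_{k=-\infty}^m 3^{sqk}\, 2^{\nf q2}\Bigl( \max_{z} \bigl|\a_*^{-1}(z+\cu_k)\bigr|\Bigr)^{\!\nf q2},
\]
and after factoring~$3^{sqm}$ the remaining geometric-type sum is precisely~$\css{sq}^{-1} 3^{sqm}\,\lambda_{s,q}^{-\nf q2}(\cu_m)$ by Definition~\ref{def.cg.ellipticity}. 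Taking~$q$-th roots gives the first line; the second line is identical but starting from~\eqref{e.energymaps.flux} (which needs~$u \in \mathcal A(\cu_m)$, hence the restriction to~$\a$-harmonic~$u$ rather than general~$H^1_\a$) and using~$\bigl|\a(z+\cu_k)\bigr|$ in place of~$\bigl|\a_*^{-1}(z+\cu_k)\bigr|$, landing on~$\Lambda_{s,q}(\cu_m)$. The cases~$q=\infty$ are handled by replacing the sum over~$k$ with a supremum and the geometric series with~$\css{\infty}=1$.

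The one genuinely careful bookkeeping point — and the step I expect to be the main obstacle — is matching the power-of-$3$ normalizations exactly so that the constant comes out as~$\css{sq}^{-\nf1q}$ with no stray dimensional factors. One has to be attentive that~\eqref{e.Bs.minus.seminorm.explicit} uses cubes indexed by~$3^k\Zd \cap \cu_n$ (which tile~$\cu_n$ exactly, giving the clean partition identity) whereas~\eqref{e.Bs.seminorm} uses~$3^{k-1}\Zd$ and inclusion — so the factor-of-$3$ offsets and the~$3^{d+s}$ in~\eqref{e.weak.norms.ordering} do \emph{not} enter here, which is why the bound is clean; I would double-check that~$\sum_{k=-\infty}^m 3^{-sq(m-k)} = (1-3^{-sq})^{-1} = \css{sq}^{-1}$ reindexes correctly against~$\sum_k 3^{sqk} = 3^{sqm}\css{sq}^{-1}$. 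A secondary point is the factor~$2^{\nf q2}$ coming from the~$\tfrac12$'s in~\eqref{e.energymaps}–\eqref{e.energymaps.flux}: these should be absorbed into the statement's constants, or one checks the intended normalization makes them cancel; in any case this only affects the implicit constant and not the structure. Everything else is just Jensen/monotonicity of~$\ell^p$-to-$\ell^q$ norms to handle~$q \le 2$ versus~$q \ge 2$ inside the~$(\cdot)^{\nf q2}$, which is routine and does not interact with the scale sum.
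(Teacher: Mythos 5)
Your proposal is correct and follows essentially the same argument as the paper: expand the seminorm~\eqref{e.Bs.minus.seminorm.explicit} scale by scale, apply~\eqref{e.energymaps} (resp.~\eqref{e.energymaps.flux}) on each subcube~$z+\cu_k$, pull out~$\max_z|\a_*^{-1}(z+\cu_k)|$ (resp.~$\max_z|\a(z+\cu_k)|$), use the exact partition identity for the averages, and read off~$\css{sq}^{-1}\lambda_{s,q}^{-\nf q2}$ from Definition~\ref{def.cg.ellipticity}. The factor~$2^{\nf q2}$ you worried about is not there at all: the~$\tfrac12$'s in~\eqref{e.energymaps}--\eqref{e.energymaps.flux} appear on both sides and cancel, so the constant-free bound in~\eqref{e.cg.Poincare} comes out exactly as stated.
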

\begin{proof}
We prove only the first line of~\eqref{e.cg.Poincare}, as the second line is similar. 
By~\eqref{e.energymaps} and~\eqref{e.coarse.grained.ellipticity}, 
\begin{align*} 
3^{-sqm} [ \nabla u ]_{\Besov{-s}{2}{q}(\cu_m)}^q
& =  
\sum_{k=-\infty}^m  3^{sq (k-m)} 
\biggl( \avsum_{z \in 3^{k} \Z^d \cap \cu_m}  \bigl|(\nabla u)_{z+\cu_k} \bigr|^2
\biggr)^{\! \nf q2} 
\notag \\ &
\leq 
\sum_{k=-\infty}^m  3^{ sq (k-m)} 
\max_{z \in 3^{k} \Z^d \cap \cu_m} 
|\a_*^{-1}(z + \cu_k)|^{\nf q2} 
\biggl( 
\avsum_{z \in 3^{k} \Z^d \cap \cu_m}  \| \a^{\nf12} \nabla u \|_{\underline{L}^2(z + \cu_k)}^2
\biggr)^{\! \nf q2}
\notag \\ &
=
\css{sq}^{-1} 
\| \a^{\nf12} \nabla u \|_{\underline{L}^2(\cu_m)}^q
\css{sq} 
\sum_{k=-\infty}^m  3^{-sq (m-k)} \max_{z \in 3^{k} \Z^d \cap \cu_m} |\a_*^{-1}(z + \cu_k)|^{\nf q2} 
\notag \\ &
= 
\css{sq}^{-1} 
\lambda_{s,q}^{-\nf q2}(\cu_m) \| \a^{\nf 12} \nabla u \|_{\underline{L}^2(\cu_m)}^q
\,. & \qedhere
\end{align*}
\end{proof}

Since we assume the field~$\a(\cdot)$ is symmetric,  the first inequality in~\eqref{e.cg.Poincare} holds for~$u\in H^1_\a(\cu_m)$, and we actually have the embedding~$H^1_\a(\cu_m) \hookrightarrow
B^{1-s}_{2,\infty}(\cu_m)$ if~$\lambda_{s,1}(\cu_m) > 0$. 

\smallskip

The classical Caccioppoli inequality is a reverse Poincar\'e-type inequality: it states that, for a universal~$C<\infty$, for every coefficient field~$\a$ satisfying~\eqref{e.UE} and~$\a$-harmonic function~$u$ in~$\cu_m$, 
\begin{equation}
\| \a^{\nf12}  \nabla u \|_{\underline{L}^2(\cu_{m-1})}^2
\leq 
C \Lambda 3^{-2m}
\| u \|_{\underline{L}^2(\cu_{m})}^2
\,.
\label{e.classical.Caccioppoli}
\end{equation}
This simple inequality is the starting point for all of (divergence-form) elliptic regularity theory. 
The proof: test the equation for~$u$ with~$\zeta^2 u$ for a cutoff function~$\zeta$ to get 
\begin{equation}
\int_{\cu_m} \zeta^2 \nabla u \cdot \a\nabla u 
= - \int_{\cu_m} 2 u \zeta \nabla \zeta \cdot \a\nabla u
\,. 
\label{e.caccioppoli.testing}
\end{equation}
Applying Cauchy-Schwarz to the right side, using the trivial bound~$\nabla \zeta \cdot \a \nabla \zeta \leq \Lambda |\nabla \zeta|^2$, we get~\eqref{e.classical.Caccioppoli}. 

\smallskip

We next present the generalization of this classical inequality to coarse-grained elliptic coefficient fields. It replaces~$\Lambda$ in~\eqref{e.classical.Caccioppoli} by the coarse-grained parameter~$\Lambda_{s,1}(\cu_m)$, at the cost of an explicit prefactor that depends on~$s$,~$t$ and on the contrast~$\Lambda_{s,1}(\cu_m) / \lambda_{t,1}(\cu_m)$.  

\begin{proposition}[{Coarse-grained Caccioppoli inequality~\cite[Proposition 2.5]{AK.HC}}]
\label{p.coarse.grained.Caccioppoli}
There exists a constant~$C(d)<\infty$ such that, 
for every~$s,t\in (0,1)$ with~$s+t < 1$ and every~$u\in\A(\cu_m)$, we have
\begin{equation}
\label{e.coarse.grained.Caccioppoli}
\| \a^{\nf12}  \nabla u \|_{\underline{L}^2(\cu_{m-1})}^2
\leq 
\biggl( \frac{C}{1-s-t} \biggr)^{\! 2 + \frac{4s}{1-s-t}} 
\biggl( \frac{\Lambda_{s,1}(\cu_m)}{\lambda_{t,1}(\cu_m)} \biggr)^{\!\frac{s}{1-s-t}}
{\Lambda}_{s,1}(\cu_m) 
3^{-2m}
\| u \|_{\underline{L}^2(\cu_{m})}^2
\,.
\end{equation}
\end{proposition}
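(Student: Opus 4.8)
The plan is to mimic the classical Caccioppoli proof --- test the equation with $\zeta^2 u$ for a cutoff $\zeta$ adapted to the pair $\cu_{m-1}\subseteq \cu_m$ --- but to estimate the right-hand side using the coarse-grained tools rather than the uniform bound $\nabla\zeta\cdot\a\nabla\zeta\le\Lambda|\nabla\zeta|^2$. Starting from the testing identity
\begin{equation*}
\int_{\cu_m}\zeta^2\nabla u\cdot\a\nabla u = -\int_{\cu_m}2u\zeta\nabla\zeta\cdot\a\nabla u\,,
\end{equation*}
the issue is that $\a$ is no longer bounded in $L^\infty$, so we must control the product $u\zeta\nabla\zeta\cdot\a\nabla u$ through a pairing of the flux $\a\nabla u$ (which, by Proposition~\ref{p.cg.Poincare}, lies in a negative Besov space controlled by $\Lambda_{s,1}^{1/2}\|\a^{1/2}\nabla u\|_{\underline L^2}$) against the function $u\zeta\nabla\zeta$, which must therefore be placed in the dual \emph{positive} Besov space $B^{s}_{2,1}$. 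This is where the estimates \eqref{e.divcurl.est0}--\eqref{e.divcurl.est1} enter: they bound $\|(u-(u)_{\cu_m})\nabla\varphi\|_{\underline B^{s}_{2,\infty}}$ by $3^m\|\nabla\varphi\|_{\underline W^{1,\infty}}[\nabla u]_{\Besovnoul{s-1}{2}{1}}$, and by the coarse-grained Poincar\'e inequality again (first line of \eqref{e.cg.Poincare}) the latter seminorm is controlled by $3^{sm}\lambda_{s,1}^{-1/2}\|\a^{1/2}\nabla u\|_{\underline L^2}$.

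So the skeleton is: (i) use the duality pairing \eqref{e.duality.Bs} with exponents $(2,1)$ and $(2,\infty)$ to write $\bigl|\fint \a\nabla u\cdot(u\zeta\nabla\zeta)\bigr|\lesssim \|u\zeta\nabla\zeta\|_{\underline B^{s}_{2,1}}\,[\a\nabla u]_{\Besovnoul{-s}{2}{\infty}}$ --- note we need $s$ on one side and $1-s$ would be wrong, so one has to be careful that the regularity exponents are paired as $s$ against $-s$ and that $s<1$; (ii) bound $[\a\nabla u]_{\Besovnoul{-s}{2}{\infty}}\le 3^{sm}\css{s}^{-1}\Lambda_{s,\infty}^{1/2}\|\a^{1/2}\nabla u\|_{\underline L^2(\cu_m)}\le 3^{sm}C(s)\Lambda_{s,1}^{1/2}\|\a^{1/2}\nabla u\|_{\underline L^2(\cu_m)}$ by the second line of \eqref{e.cg.Poincare} and the trivial ordering $\Lambda_{s,\infty}\le\Lambda_{s,1}$; (iii) bound the positive-norm factor via \eqref{e.divcurl.est1} (absorbing $u\mapsto u-(u)_{\cu_m}$, which is harmless since $\nabla\zeta$ annihilates constants) and then the first line of \eqref{e.cg.Poincare} to get $\lesssim 3^m\cdot 3^{-m}\cdot 3^{sm}\css{s}^{-1}\lambda_{s,1}^{-1/2}\|\a^{1/2}\nabla u\|_{\underline L^2(\cu_m)}$, where the $3^{-m}$ comes from $\|\nabla\zeta\|_{\underline W^{1,\infty}}\asymp 3^{-m}$; (iv) combine, cancelling one power of $\|\a^{1/2}\nabla u\|_{\underline L^2(\cu_m)}$ from the left against the square on the right.

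A naive execution of (i)--(iv) produces a bound of the form $\|\a^{1/2}\nabla u\|_{\underline L^2(\cu_{m-1})}^2 \lesssim \css{s}^{-2} 3^{2sm}\cdot 3^{-2m}\bigl(\Lambda_{s,1}/\lambda_{s,1}\bigr)^{1/2}\Lambda_{s,1}^{1/2}\lambda_{s,1}^{-1/2}\cdots$, which has a $3^{2sm}$ growth that must be killed by a norm of $u$ that is weaker than $\underline L^2$; but the statement asks for $\|u\|_{\underline L^2(\cu_m)}^2$ on the right. The standard fix, and the technical heart of the argument, is an \textbf{iteration/interpolation step}: one does not use a single cutoff between $\cu_{m-1}$ and $\cu_m$, but a chain of cutoffs between $\cu_{m-1}$ and $\cu_{m-1+j}$ for $j=1,\dots,N$ (or a continuum of radii), at each stage trading the bad $3^{2s\cdot(\text{scale})}$ factor against a slightly larger cube, and one closes the iteration using Young's inequality to absorb the energy term and a geometric-series / telescoping bound on the accumulated constants. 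This is exactly what generates the two $s$-dependent exponents in \eqref{e.coarse.grained.Caccioppoli}: the exponent $\tfrac{s}{1-s-t}$ on the contrast $\Lambda_{s,1}/\lambda_{t,1}$ and the exponent $2+\tfrac{4s}{1-s-t}$ on $C/(1-s-t)$ both arise as the number of iteration steps (of order $1/(1-s-t)$) times the per-step loss. Carrying out this iteration carefully --- tracking how the constant, the contrast power, and the $3^{m}$ powers compound, and verifying the geometric series converges precisely when $s+t<1$ --- is the main obstacle; everything else is a routine assembly of the inequalities already recorded in the excerpt. I would also need to handle one small point: \eqref{e.divcurl.est0}--\eqref{e.divcurl.est1} are stated with $u$ replaced by $u-(u)_{\cu_m}$ and with a $B^s_{2,\infty}$-norm, so to feed into the $B^s_{2,1}$ pairing \eqref{e.duality.Bs} one either works with $q=\infty$ on the flux side (as in step (ii), where $\Lambda_{s,\infty}$ appears) and $q=1$ on this side, or interpolates --- I expect the former, matching the $\infty$/$1$ H\"older-conjugate split, which is why the trivial ordering $\Lambda_{s,\infty}\le\Lambda_{s,1}$ is invoked rather than something sharper.
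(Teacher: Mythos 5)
Your steps (i)--(iii) do reproduce the paper's opening moves: testing the equation with $\zeta^2 u$ as in \eqref{e.caccioppoli.testing}, replacing Cauchy--Schwarz by the duality pairing \eqref{e.duality.Bs} with the help of \eqref{e.divcurl.est1}, and feeding the resulting weak norms into Proposition~\ref{p.cg.Poincare}; this is exactly the sketch around \eqref{e.duality.Bs.cacc} (modulo slips: the pairing must be taken with $B^{s}_{2,\infty}$ on the function side and $\Besov{-s}{2}{1}$ on the flux side, since \eqref{e.divcurl.est1} only produces a $B^{s}_{2,\infty}$ bound, and the seminorm $[\nabla u]_{\Besov{s-1}{2}{1}(\cu_m)}$ is controlled by $3^{(1-s)m}\lambda_{1-s,1}^{-\nicefrac12}(\cu_m)\,\|\a^{\nicefrac12}\nabla u\|_{\underline{L}^2(\cu_m)}$, i.e.\ through the \emph{second} parameter $t\le 1-s$, which your sketch never uses, not through $3^{sm}\lambda_{s,1}^{-\nicefrac12}$). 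You are also right that a one-shot application of this chain cannot give \eqref{e.coarse.grained.Caccioppoli}.

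The gap is in your proposed repair. After \eqref{e.divcurl.est1} and \eqref{e.cg.Poincare}, \emph{every} factor on the right-hand side has been converted back into the energy $\|\a^{\nicefrac12}\nabla u\|_{\underline{L}^2(\cu_m)}$: the quantity $\|u\|_{\underline{L}^2(\cu_m)}$ never appears, and the multiplicative constant, of order $(\Lambda_{s,1}(\cu_m)/\lambda_{t,1}(\cu_m))^{\nicefrac12}$, is not small. Iterating over a chain of cutoff radii between $3^{m-1}$ and $3^m$ therefore has nothing to absorb and nothing to telescope toward: each step again yields ``energy $\leq$ (large constant) $\times$ energy'', and Young's inequality cannot manufacture the missing $\|u\|_{\underline{L}^2}$ term. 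The mechanism used in the actual proof---and flagged in the paper as the real subtlety---is different: one applies the duality pairing in \emph{all subcubes} $z+\cu_n$ at a well-chosen mesoscopic scale $n<m$. Centering $u$ at its average in each subcube splits the error into a part controlled by \eqref{e.divcurl.est1} and \eqref{e.cg.Poincare} at scale $n$, which carries a smallness factor of order $3^{-(1-s-t)(m-n)}$ against the contrast and can thus be absorbed once $m-n\gtrsim(1-s-t)^{-1}\log_3\bigl(\Lambda_{s,1}(\cu_m)/\lambda_{t,1}(\cu_m)\bigr)$, and a part involving the cube averages $(u)_{z+\cu_n}$, which is where $\|u\|_{\underline{L}^2(\cu_m)}$ finally enters, at the price of factors like $3^{s(m-n)}\Lambda_{s,1}(\cu_m)$ from comparing coarse-grained quantities on scale-$n$ cubes to those on $\cu_m$. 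It is this choice of the scale gap---not a count of cutoff iterations---that produces the exponent $\tfrac{s}{1-s-t}$ on the contrast, while the $(C/(1-s-t))$-type prefactor comes from geometric sums with ratio $3^{-(1-s-t)}$. Without this mesoscopic subcube decomposition your scheme does not close, so the proposal as written misses the essential idea of the paper's argument.
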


The proof of Proposition~\ref{p.coarse.grained.Caccioppoli} starts from the identity~\eqref{e.caccioppoli.testing} but estimates the right side differently. By rescaling, assume~$m=0$. Rather than using Cauchy-Schwarz and splitting the field~$\a(x)$ between the two terms, we use the duality pairing in~\eqref{e.duality.Bs} to find, with the help of~\eqref{e.divcurl.est1},
\begin{align}
\label{e.duality.Bs.cacc}
\biggl| \fint_{\cu_0} 
2 u \zeta \nabla \zeta \cdot \a\nabla u
\biggr| 
&
\leq
C
\| u \nabla (\zeta^2) \|_{\Besovul{s}{2}{\infty}(\cu_0)}
\left[ \a\nabla u \right]_{\Besov{-s}{2}{1}(\cu_{0})}
\notag\\ & 
\leq
C \| \nabla (\zeta^2) \|_{W^{1,\infty}(\cu_0)} 
\left[ \nabla u \right]_{\Besov{-(1-s)}{2}{1}(\cu_{0})}
\left[ \a\nabla u \right]_{\Besov{-s}{2}{1}(\cu_{0})}
\,.
\end{align}
The two weak norms on the right side can now be estimated using Proposition~\ref{p.cg.Poincare}, and this is where the coarse-grained ellipticity constants naturally enter. 
The actual argument is slightly more subtle than this sketch (one needs to apply the duality pairing~\eqref{e.duality.Bs} in all subcubes at a well-chosen mesoscopic scale) and we refer to~\cite{AK.HC} for the complete details. 

\smallskip

Beyond the ellipticity parameters, we also need to quantify, at each scale, the \emph{defect} of the block model relative to a fixed constant matrix~$\ahom$. The functional~$J$ conveniently packages both the Dirichlet and Neumann responses, and simultaneously the gap and the deviation from~$\ahom$. By~\eqref{e.J.a.astar} and some algebra, we find 
\begin{equation*}
J \bigl(U,\ahom^{-\nf12}e,\ahom^{\nf12}e\bigr)
=
\frac12 e \cdot 
\ahom^{-\nf12} 
\Bigl( 
\a(U) - \a_*(U) 
+
( \a_*(U) - \ahom ) 
\a_*^{-1} (U) 
( \a_*(U) - \ahom ) 
\Bigr) 
\ahom^{-\nf12}
e
\,.
\end{equation*}
This observation motivates a scale-discounted multiscale quantity that aggregates the defect across all subcubes and scales.

\begin{definition}[Multiscale defect]
\label{def.mathcalE}
For $s\in(0,\nf12)$, $q\in[1,\infty]$, and $m\in\mathbb{Z}$, define
\begin{equation}
\mathcal{E}_{s,q}(\cu_m)
\coloneqq
\biggl(
\css{sq}
\sum_{k=-\infty}^{m}
3^{-sq(m-k)}
\max_{z\in 3^k\Zd\cap \cu_m}\,
\max_{|e|=1}
\Bigl(J\bigl(z+\cu_k,\ahom^{-\nf12} e,\ahom^{\nf12} e\bigr)\Bigr)^{\nf q2} 
\biggr)^{\!\nf1q},
\label{e.mathcal.E.def}
\end{equation}
with the modification that the sum over $k$ is replaced by a supremum when $q=\infty$ and $\css{\infty}=1$.
\end{definition}

The next estimate is the coarse-grained analogue of the Poincar\'e-type bounds in Proposition~\ref{p.cg.Poincare}, now for the \emph{operator defect}. It shows that the action of $\a-\ahom$ on $\nabla u$ is small in a negative regularity norm, with size quantified by $\mathcal{E}_{s,2}$. The proof is analogous to the one of Proposition~\ref{p.cg.Poincare}. 

\begin{proposition}[{Coarse-graining an elliptic operator~\cite[Lemma 5.5]{AK.HC}}]
\label{p.CG.elliptic.operator}
There exists~$C(d)<\infty$ such that, for every~$s\in(0,\nf12)$ and every $u\in\A(\cu_m)$,
\begin{equation}
\label{e.CG.elliptic.operator}
3^{-sm} 
\bigl\| \ahom^{-\nf12} (\a-\ahom)\nabla u \bigr\|_{\Besov{-s}{2}{2} (\cu_m)}
\le
C\mathcal{E}_{s,2}(\cu_m)
\bigl\|\a^{\nf12} \nabla u\bigr\|_{\underline{L}^2(\cu_m)}
\,.
\end{equation}
\end{proposition}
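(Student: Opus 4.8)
The plan is to mimic the proof of Proposition~\ref{p.cg.Poincare} almost verbatim, replacing the role played by the coarse-grained matrices $\a_*^{-1}(z+\cu_k)$ (which controlled block gradients of $\a$-harmonic functions via~\eqref{e.energymaps}) by the matrix $\ahom^{-\nf12}(\a(z+\cu_k)-\a_*(z+\cu_k))\ahom^{-\nf12}$ plus the lower-order term appearing in the identity for $J(z+\cu_k,\ahom^{-\nf12}e,\ahom^{\nf12}e)$, which is precisely what the multiscale defect $\mathcal{E}_{s,2}$ aggregates. The engine is the response-map inequality~\eqref{e.response.map}: for $w\in\mathcal{A}(U)$ it gives control of $\fint_U \a\nabla w - \a_*(U)\fint_U\nabla w$ by $|\a(U)-\a_*(U)|^{1/2}$ times the local energy, and combined with the trivial bound on $\fint_U(\a_*(U)-\ahom)\nabla w$ (using $\a_*(U)\le \a(U)$ and the energy bound~\eqref{e.energymaps} for $\fint_U\nabla w$), one gets a block-by-block estimate
\begin{equation}
\label{e.plan.block}
\Bigl| \ahom^{-\nf12}\fint_{z+\cu_k}(\a-\ahom)\nabla u \Bigr|
\le
C\Bigl(\max_{|e|=1} J\bigl(z+\cu_k,\ahom^{-\nf12}e,\ahom^{\nf12}e\bigr)\Bigr)^{\nf12}
\bigl\|\a^{\nf12}\nabla u\bigr\|_{\underline{L}^2(z+\cu_k)}
\,,
\end{equation}
valid for every $\a$-harmonic $u$, where the square-root factor is exactly one term inside the $k$-th summand of~\eqref{e.mathcal.E.def}.

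Granting~\eqref{e.plan.block}, the argument is then bookkeeping identical to the computation displayed in the proof of Proposition~\ref{p.cg.Poincare}. First I would write out the definition~\eqref{e.Bs.minus.seminorm.explicit} of $[\,\cdot\,]_{\Besovnoul{-s}{2}{2}}$ applied to $f\coloneqq \ahom^{-\nf12}(\a-\ahom)\nabla u$: this is a sum over scales $k\le m$ of $3^{sqk}$ times the $\ell^{q/p}$-aggregate (here $p=q=2$) over cubes $z+\cu_k\subseteq\cu_m$ of $|(f)_{z+\cu_k}|^2$. Insert~\eqref{e.plan.block} to bound each $|(f)_{z+\cu_k}|^2$ by $C\,(\max_{|e|=1}J(z+\cu_k,\ahom^{-\nf12}e,\ahom^{\nf12}e))$ times $\|\a^{\nf12}\nabla u\|_{\underline{L}^2(z+\cu_k)}^2$; pull out the maximum of the $J$-factor over the cubes at scale $k$; and use that, for $\a$-harmonic $u$, the averaged local energies sum correctly, $\avsum_{z}\|\a^{\nf12}\nabla u\|_{\underline{L}^2(z+\cu_k)}^2=\|\a^{\nf12}\nabla u\|_{\underline{L}^2(\cu_m)}^2$. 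This reproduces, up to the constant $C(d)$ and the factor $\css{sq}$ absorbed into $\mathcal{E}_{s,2}$, exactly $3^{2sm}\,C\,\mathcal{E}_{s,2}(\cu_m)^2\,\|\a^{\nf12}\nabla u\|_{\underline{L}^2(\cu_m)}^2$, which is the square of~\eqref{e.CG.elliptic.operator}. Finally, invoke~\eqref{e.weak.norms.ordering} to pass from the explicit seminorm $[\,\cdot\,]_{\Besovnoul{-s}{2}{2}}$ to the one actually appearing in the statement (they differ only by a $3^{d+s}$ factor), which is harmless.

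The main obstacle is establishing~\eqref{e.plan.block} cleanly, i.e.\ identifying the block flux-defect $\ahom^{-\nf12}\fint_{z+\cu_k}(\a-\ahom)\nabla u$ with something controlled by $(\max_{|e|=1} J(z+\cu_k,\ahom^{-\nf12}e,\ahom^{\nf12}e))^{1/2}$ times the local energy. The clean way is to go back to the first-variation identity~\eqref{e.J.first.var} with $U=z+\cu_k$, $p=\ahom^{-\nf12}e$, $q=\ahom^{\nf12}e$, and the maximizer $v=v(\cdot,U,p,q)$: for any $w\in\mathcal{A}(U)$,
\[
\ahom^{\nf12}e\cdot\fint_U\nabla w-\ahom^{-\nf12}e\cdot\fint_U\a\nabla w
=\fint_U\nabla w\cdot\a\nabla v
\,,
\]
and the left side is exactly $-e\cdot\ahom^{-\nf12}\fint_U(\a-\ahom)\nabla w$ (after rewriting $\ahom^{\nf12}e\cdot\fint\nabla w = e\cdot\ahom^{-\nf12}\ahom\fint\nabla w$). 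Cauchy--Schwarz on the right side together with~\eqref{e.J.energy.of.v} then gives the bound by $(2J(U,p,q))^{1/2}(\fint_U\nabla w\cdot\a\nabla w)^{1/2}$; taking the max over $|e|=1$ yields~\eqref{e.plan.block}. One subtlety, exactly as flagged in the proof sketch of Proposition~\ref{p.coarse.grained.Caccioppoli}, is that one may need to be slightly careful about at which scales the cubes $z+\cu_k$ are taken (the sum in~\eqref{e.Bs.minus.seminorm.explicit} is over $z\in 3^k\Zd\cap\cu_m$, and the energies of $u$ restricted to a triadic partition of $\cu_m$ at scale $k$ do add up to the full energy since $u$ is $\a$-harmonic on all of $\cu_m$), but this is precisely the same bookkeeping that already works in Proposition~\ref{p.cg.Poincare}, so no genuinely new difficulty arises; for the full details one refers to~\cite{AK.HC}.
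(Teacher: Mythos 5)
Your proposal is correct and is essentially the proof the paper intends: the block estimate \eqref{e.plan.block} follows, exactly as you say, from \eqref{e.J.first.var}--\eqref{e.fluxmaps} with $p=\ahom^{-\nf12}e$, $q=\ahom^{\nf12}e$ (so that $q\cdot\nabla w-p\cdot\a\nabla w=-e\cdot\ahom^{-\nf12}(\a-\ahom)\nabla w$), and the multiscale summation is then the same bookkeeping as in Proposition~\ref{p.cg.Poincare}. The only caveat is cosmetic: as in Proposition~\ref{p.cg.Poincare}, your argument produces the prefactor $\css{2s}^{-\nf12}$ coming from the normalization of $\mathcal{E}_{s,2}$, so the constant is really of the form $C\css{2s}^{-\nf12}$ rather than a purely dimensional $C(d)$ uniformly down to $s\to 0$, which matches the normalization conventions of the cited reference rather than indicating any gap in your reasoning.
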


Finally,~$\mathcal{E}_{s,q}$ controls the distance between solutions of the heterogeneous and constant-coefficient problems. If~$u\in H^1_\a(U)$ and~$h\in H^1(U)$ solve~$-\nabla\cdot\a\nabla u=0$ and~$-\nabla\cdot\ahom\nabla h=0$ in a Lipschitz domain~$U\subseteq\cu_0$ with the same boundary data, then
\begin{equation}
\| \ahom^{\nf12} (\nabla u-\nabla h) \|_{\Besov{-s}{2}{2} (U)}
+
\|\ahom^{-\nf12} ( \a\nabla u-\ahom\nabla h ) \|_{\Besov{-s}{2}{2} (U)}
\leq
C \mathcal{E}_{s,2}(\cu_0)
\| \a^{\nf12} \nabla u\|_{L^2(U)}
\,.
\end{equation}
The precise statement can be found in~\cite[Proposition 5.3]{AK.HC}, including the definition of the norms on the left side. 

\section{Upper bound estimate for the homogenization length scale} 
\label{s.HC}

We present a complete proof of Theorem~\ref{t.AK.HC} under the following simplifying assumptions on the law~$\P$ of the coefficient field~$\a(\cdot)$. 

\begin{enumerate}
[label=(\textrm{P\arabic*})]
\setcounter{enumi}{0}
\item \emph{Stationarity:} $\P$ is a probability measure on~$(\Omega,\F)$ which is invariant under~$\Zd$-translations. 
\label{a.stationarity}

\item\emph{Quantitative ergodicity:} $\P$ has a unit range of dependence. 
\label{a.frd} 

\item \emph{Coarse-grained ellipticity:} there exist exponents~$\lams,\Lams \in [0,1)$ and~$\xi \in [1,\infty)$ satisfying 
\begin{equation*}
\Lams+\lams <1
\qquad \mbox{and} \qquad 
16 d(1-\Lams-\lams)^{-1} \leq \xi < \infty
\end{equation*}
such that 
\label{a.ellipticity}
\begin{equation}
\E \bigl[ \Lambda_{\Lams,1}^{\xi}(\cu_0)]^{\nf1\xi}
\,
\E \bigl[ \lambda_{\lams,1}^{-\xi}(\cu_0)]^{\nf1\xi}
< \infty\,.
\label{e.ellipticity}
\end{equation}

\item \emph{Symmetry:}~$\P$--almost surely, the coefficient field~$\a(\cdot)$ is valued in~$\R^{d\times d}_{\sym}$. \label{a.symmetry}

\item \emph{Dihedral symmetry:} the law~$\P$ is invariant under permutations of the coordinate axes. That is, for every matrix~$R$ with exactly one~$1$ in each row and column and $0$s elsewhere, the law of the conjugated coefficient~$R^t \a(R \cdot) R$ is the same as that of~$\a(\cdot)$.
\label{a.iso}
\end{enumerate} 

The assumptions~\ref{a.symmetry} and~\ref{a.iso} are unnecessary and~\ref{a.frd} is much stronger than what is required (essentially any mixing condition can be used). We make these assumptions here in order to simplify the presentation, and refer the reader to~\cite[Theorem B]{AK.HC} for the more general case. 
The coarse-grained ellipticity assumption~\ref{a.ellipticity} is a variant of the condition introduced in~\cite{AK.HC}. 

\smallskip

The \emph{annealed} coarse-grained matrices are defined by
\begin{equation*}
\ahom(U) \coloneqq \E \bigl[ \a(U) \bigr] 
\qquad \mbox{and} \qquad 
\ahom_*^{-1} (U) \coloneqq \E \bigl[ \a^{-1} (U) \bigr] 
\,.
\end{equation*}
The subadditivity~\eqref{e.subadditivity} and ordering~\eqref{e.ordering} properties imply that, for every~$m,n\in\N$ with~$n\leq m$,
\begin{equation}
\label{e.monotonicity.homs}
\ahom_*(\cu_n) \leq \ahom_*(\cu_m) \leq \ahom(\cu_m) \leq \ahom(\cu_n) \,.
\end{equation}
The main use of assumption~\ref{a.iso} is that it forces the annealed matrices~$\ahom(\cu_j)$ and~$\ahom_*(\cu_j)$ to be multiplies of the identity. We will therefore treat these as scalar matrices, or positive real numbers, whichever is convenient. 

\smallskip

We introduce the exponents
\begin{equation*}
\specs_1 \coloneqq \gamma_1+ \tfrac18( 1- \Lams-\lams)
\qquad \mbox{and} \qquad  
\specs_2 \coloneqq \gamma_2 + \tfrac18( 1- \Lams-\lams)
\,.
\end{equation*}
Observe that~$\nu_1 + \nu_2 < 1$. We also introduce, for each~$n\in\N$, the quantities
\begin{equation*}
\left\{
\begin{aligned}
& \Theta_n \coloneqq  \ahom(\cu_n) \ahom_*^{-1} (\cu_n) \,, \\ 
& \tilde{\Theta}_n \coloneqq
\E \bigl[ \Lambda_{\specs_1,1}^\xi (\cu_n) \bigr]^{\nf1\xi} 
\E \bigl[ \lambda_{\specs_2,1}^{-\xi} (\cu_n) \bigr]^{\nf1\xi} 
\,.
\end{aligned}
\right.
\end{equation*}
It is clear that these quantities are monotone decreasing in~$n$ and, for every~$n$, 
\begin{equation}
\Theta_n 
\leq 
\tilde{\Theta}_n 
\leq 
\tilde{\Theta}_0
\leq 
\E \bigl[ \lambda_{\specs_1,1}^{-\xi}(\cu_0)]^{\frac1\xi}
\,
\E \bigl[ \Lambda_{\specs_2,1}^{\xi}(\cu_0)]^{\frac1\xi}
\,.
\end{equation}

The main result we prove in this section is the following theorem, which is essentially a special case of~\cite[Theorem 3.1]{AK.HC}. 

\begin{theorem}[{Homogenization after~$C\log^2 \tilde{\Theta}_0$  scales}]
\label{t.highcontrast} 
Assume~\ref{a.stationarity}--\ref{a.iso}. 
Then there exists a constant~$C(\xi,d)<\infty$ such that, for every~$\sigma \in (0,\nf12]$ and~$N\in\N$, 
\begin{equation*}
N \geq 
C \log^2 \tilde{\Theta}_0 
+
C \sigma^{-4} \log (\sigma^{-1})
\quad \implies \quad  
\tilde{\Theta}_N \leq 1+\sigma \,.
\end{equation*}
\end{theorem}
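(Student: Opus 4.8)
The plan is to establish a single-step contraction lemma and then iterate it. The heart of the matter is to show that there exists a constant $C(\xi,d)<\infty$ and a number of scales $L \leq C\log\tilde\Theta_0$ such that, whenever the contrast at scale $n$ exceeds a fixed threshold (say $\tilde\Theta_n \geq 2$), one has the geometric decay
\begin{equation*}
\tilde\Theta_{n+L} \leq \tfrac12 \tilde\Theta_n \,.
\end{equation*}
Granting this, the contrast halves every $L$ scales until it drops below the threshold, which costs at most $\sim \log_2\tilde\Theta_0$ halvings and therefore at most $\sim L\log_2\tilde\Theta_0 \lesssim \log^2\tilde\Theta_0$ scales; this produces the first term in the hypothesis on $N$. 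Once we are in the near-homogenized regime $\tilde\Theta_n \leq 2$, a separate (and softer) quantitative argument—no longer needing to defeat a large contrast—drives $\tilde\Theta_n$ down to $1+\sigma$ at the polynomial-in-$\sigma^{-1}$ rate recorded in the second term $C\sigma^{-4}\log(\sigma^{-1})$, presumably via the standard subadditive-energy / concentration machinery where the rate is now governed only by the fixed fluctuation exponents and the finite range of dependence.

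For the contraction lemma itself I would proceed as follows. First, by the monotonicity~\eqref{e.monotonicity.homs} of the annealed matrices and a dyadic pigeonhole over the $\sim \log\tilde\Theta_n$ many scales between $n$ and $n+C\log\tilde\Theta_n$, select a ``good'' scale $\ell$ at which the annealed coarse-grained matrices have not decreased much, i.e. $\ahom(\cu_{\ell}) \leq (1+\epsilon)\ahom(\cu_{\ell+1})$ and similarly for $\ahom_*$, with $\epsilon$ a small dimensional constant; such a scale must exist because the total multiplicative decrease of $\ahom/\ahom_*$ over the whole window is bounded. At this good scale, the subadditivity defect~\eqref{e.subadditivity} of $J$ is small, so the multiscale defect $\mathcal E_{s,2}(\cu_\ell)$—which by the computation preceding Definition~\ref{def.mathcalE} packages exactly the gap $\a(\cu)-\a_*(\cu)$ and the deviation $\a(\cu)-\ahom$—is quantitatively controlled. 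Then I would invoke the coarse-grained Caccioppoli inequality (Proposition~\ref{p.coarse.grained.Caccioppoli}) together with the coarse-grained Poincaré inequality (Proposition~\ref{p.cg.Poincare}) and the operator-coarse-graining estimate (Proposition~\ref{p.CG.elliptic.operator}) to compare the heterogeneous solutions on $\cu_{\ell+k}$ (for a further $\sim\log\tilde\Theta_n$ steps $k$) with constant-coefficient solutions, thereby improving the eigenvalue bounds that define $\Lambda_{\nu_1,1}$ and $\lambda_{\nu_2,1}$: each application shrinks the contrast by a fixed factor, and the slack $\tfrac18(1-\Lams-\lams)$ built into $\nu_1,\nu_2$ above the critical exponents $\Lams,\lams$ is exactly what makes the prefactors in these propositions uniform (they blow up only as $1-s-t\to0$, which we stay away from). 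Finally one takes expectations using the finite range of dependence~\ref{a.frd} and the $\xi$-th moment bound~\ref{a.ellipticity} to convert the deterministic, realization-wise improvement into the annealed statement $\tilde\Theta_{n+L}\leq\tfrac12\tilde\Theta_n$; the condition $\xi\geq 16d(1-\Lams-\lams)^{-1}$ is presumably what guarantees the required $L^\xi$ integrability survives each step (multiscale sums converge and Jensen/Hölder losses are absorbed).

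The main obstacle, I expect, is the bookkeeping of the iteration in a way that keeps \emph{all} constants uniform across the $\sim\log\tilde\Theta_0$ many halvings: each single halving already needs $L\sim C\log\tilde\Theta_n$ scales, and one must verify that the coarse-grained ellipticity hypothesis genuinely reproduces itself at the end of each block (this is the ``closure'' emphasized in the introduction) with the \emph{same} exponents $\nu_1,\nu_2$ and the \emph{same} moment exponent $\xi$—otherwise the exponents would drift toward the critical line $\nu_1+\nu_2=1$ and the Caccioppoli prefactor $(C/(1-s-t))^{2+4s/(1-s-t)}$ would explode after finitely many steps. Controlling this drift—showing that the update inequality lands back in the same class rather than a degraded one—is the crux; everything else is a careful but routine assembly of the deterministic coarse-grained estimates from Section~2 together with the probabilistic inputs~\ref{a.frd} and~\ref{a.ellipticity}.
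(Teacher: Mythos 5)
Your high-level skeleton---pigeonhole a window of scales on which the annealed matrices are nearly constant, prove a contraction over $C\log\tilde{\Theta}$ scales, iterate $C\log\tilde{\Theta}$ times, then make one more pass to reach $1+\sigma$---is indeed the architecture of the paper (Lemmas~\ref{l.pigeon} and~\ref{l.one.step}, iterated in the proof of Theorem~\ref{t.highcontrast}). The gap is in the mechanism of the contraction itself, which is where all the work lies. You claim that at a good scale the smallness of the subadditivity defect gives quantitative control of the multiscale defect $\mathcal{E}_{s,2}(\cu_\ell)$, and that Propositions~\ref{p.cg.Poincare}, \ref{p.coarse.grained.Caccioppoli} and~\ref{p.CG.elliptic.operator} then ``improve the eigenvalue bounds by a fixed factor per application.'' Neither step stands as stated. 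The quantity $\mathcal{E}_{s,2}$ aggregates $J(z+\cu_k,\ahom^{-\nf12}e,\ahom^{\nf12}e)$, which by the identity preceding Definition~\ref{def.mathcalE} contains the gap $\a(\cu)-\a_*(\cu)$ and the deviation $\a_*(\cu)-\ahom$; its expected size is exactly of the order of the contrast you are trying to bound, and near-constancy of $n\mapsto\ahom(\cu_n),\ahom_*^{-1}(\cu_n)$ across a window says nothing directly about it---smallness of the gap at the good scale is the \emph{conclusion} of the contraction step, not an input. Moreover you select a single good step ($\ell$ versus $\ell+1$) with a constant $\epsilon$, whereas the argument needs near-constancy across a whole block of $h\sim \xi\log(\delta^{-1}\xi\tilde{\Theta}_0)$ consecutive scales (and, for the final stage, with $\delta\sim\sigma^4$), because the scale-separation error terms carry factors like $3^{-cm}\tilde{\Theta}_0$ that can only be beaten when the block length is at least logarithmic in the contrast; this is also why the pigeonhole is a dichotomy (good block, or the contrast has already dropped by the factor $\sigma$). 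Finally, no argument is offered for why comparison with constant-coefficient solutions would shrink the contrast by a fixed factor, and in fact the paper's proof never uses Propositions~\ref{p.cg.Poincare}--\ref{p.CG.elliptic.operator} in Section~\ref{s.HC}.

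What the paper actually does in Lemma~\ref{l.toss.J} is the missing idea: with $\m_0$ the geometric mean of $\ahom(\cu_m)$ and $\ahom_*(\cu_m)$, one has $\Theta_m-1\le 2\max_{|e|=1}\E\bigl[\widetilde{J}\bigl(\cu_m,\m_0^{-\nf12}e,\m_0^{\nf12}e\bigr)\bigr]$, and $\E[\widetilde{J}]$ is bounded (Lemma~\ref{l.J.upperbound}) by the expected additivity defect $\tau_{m,k}$---which is small precisely because of the pigeonholed near-constancy---plus negative-Besov norms of $\nabla v-p_0$ and $\a\nabla v-q_0$, where crucially $p_0=\ahom_*^{-1}(\cu_m)q-p$ and $q_0=q-\ahom(\cu_m)p$ are the annealed predictions; these weak norms are in turn estimated (Lemma~\ref{l.weaknorms}) by additivity defects and by fluctuations of spatial averages of subscale coarse-grained matrices, which are controlled through Rosenthal's inequality and the unit range of dependence (Lemma~\ref{l.lambda.bounds}). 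Two further corrections: the ``closure/exponent drift'' you identify as the crux does not arise here---the exponents $\specs_1,\specs_2$ and the moment exponent $\xi$ are fixed once, the iteration runs on the monotone annealed quantities $\tilde{\Theta}_n$, and subadditivity plus Lemma~\ref{l.lambda.bounds} transports the scale-zero moment bound to all scales---and the final passage from $\tilde{\Theta}\le 2$ to $1+\sigma$ is not a separate softer argument but the same Lemma~\ref{l.one.step} applied with small $\sigma$, which is where the $\sigma^{-4}\log(\sigma^{-1})$ (coming from $\delta\sim\sigma^4$ in the $\delta^{\nicefrac14}$-contraction and the $\delta^{-1}|\log\sigma|$ pigeonhole count) originates.
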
 

The proof of Theorem~\ref{t.highcontrast} is based on a ``dyadic pigeonholing'' scale-selection argument: among the first~$N$ triadic scales, we extract a long run of~$h$ consecutive scales on which the relevant scale-monotone parameters vary only by fixed multiplicative factors. This selection is encoded in the following very simple dyadic (or perhaps \emph{triadic}) pigeonhole lemma. This kind of lemma is a common tool in harmonic analysis~\cite{TT}.
Note that, in the proof of Theorem~\ref{t.highcontrast},  the lemma will be iteratively applied~$C \log (1+\tilde{\Theta}_0)$ times with~$h = C \log (1+\tilde{\Theta}_0)$ and~$\delta = c$, which accounts for the~$C\log^2 (1+\tilde{\Theta}_0)$ in the statement of the theorem.

\begin{lemma}[Pigeonhole lemma] 
\label{l.pigeon}
Let~$\delta , \sigma \in (0,\nf12]$ and~$N,h \in \N$ such that~$N \geq \lceil 2 \delta^{-1} |\log \sigma|  \rceil h$. Then at least one of the following statements is valid: 
\begin{itemize} 
\item There exists a scale parameter~$n \in \{ h,\ldots, N \}$ such that
\begin{equation*}
\ahom(\cu_{n-h} ) \leq (1+\delta) \ahom(\cu_{n})
\qquad \mbox{and} \qquad 
\ahom_*^{-1} (\cu_{n-h}) \leq (1+\delta) \ahom_*^{-1} (\cu_{n} ).
\end{equation*}

\item $\Theta_N \leq \sigma \Theta_0$. 

\end{itemize} 
\end{lemma}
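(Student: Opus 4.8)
The plan is to use a straightforward contrapositive pigeonholing. Suppose the first alternative fails: then for \emph{every} scale $n\in\{h,\ldots,N\}$ at least one of the two inequalities
\[
\ahom(\cu_{n-h}) \leq (1+\delta)\ahom(\cu_n)
\,, \qquad
\ahom_*^{-1}(\cu_{n-h}) \leq (1+\delta)\ahom_*^{-1}(\cu_n)
\]
is violated. Our goal is to deduce $\Theta_N \leq \sigma\Theta_0$. The idea is to partition $\{0,1,\ldots,N\}$ into consecutive blocks of length $h$: set $n_j := N - jh$ for $j=0,1,\ldots,J$ where $J := \lfloor N/h\rfloor \geq \lceil 2\delta^{-1}|\log\sigma|\rceil$. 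On each block $[n_{j+1}, n_j]$ the first alternative failing means that either $\ahom(\cu_{n_{j+1}}) > (1+\delta)\ahom(\cu_{n_j})$ or $\ahom_*^{-1}(\cu_{n_{j+1}}) > (1+\delta)\ahom_*^{-1}(\cu_{n_j})$. Because $\ahom$ and $\ahom_*^{-1}$ are scalars (by \ref{a.iso}) and monotone in $n$ by \eqref{e.monotonicity.homs}, the product $\Theta_n = \ahom(\cu_n)\ahom_*^{-1}(\cu_n)$ is scale-monotone decreasing; and on each block it must drop by a factor of at least $(1+\delta)$:
\[
\Theta_{n_{j+1}} = \ahom(\cu_{n_{j+1}})\ahom_*^{-1}(\cu_{n_{j+1}})
\geq (1+\delta)\,\ahom(\cu_{n_j})\,\ahom_*^{-1}(\cu_{n_j}) = (1+\delta)\Theta_{n_j}
\,,
\]
using that the non-improving factor among $\ahom,\ahom_*^{-1}$ is still $\geq 1$ by monotonicity. (Here one must be slightly careful with the indices: $\Theta_0$ sits at the coarsest index $n=0$, so $\Theta$ is actually \emph{increasing} as the index decreases; the point is simply that consecutive block-endpoints differ by the factor $(1+\delta)$.)

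Chaining this over all $J$ blocks gives $\Theta_0 \geq (1+\delta)^J \Theta_N$, i.e.
\[
\Theta_N \leq (1+\delta)^{-J}\,\Theta_0
\leq \exp\bigl(-\tfrac{\delta}{2}J\bigr)\,\Theta_0
\,,
\]
where we used $\log(1+\delta) \geq \delta/2$ for $\delta\in(0,\nf12]$. Since $J \geq 2\delta^{-1}|\log\sigma|$, we get $\exp(-\tfrac{\delta}{2}J) \leq \exp(-|\log\sigma|) = \sigma$, hence $\Theta_N \leq \sigma\Theta_0$, which is the second alternative. This completes the argument.

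The only genuinely delicate point—and the one I would be most careful about—is bookkeeping the direction of monotonicity and the off-by-one in the block decomposition, since the indices $\cu_{n-h}$ and $\cu_n$ in the statement have the coarser cube carrying the \emph{smaller} index $n-h$, whereas $\Theta_n$ is indexed so that $\Theta_0$ is the value on $\cu_0$. Concretely: \eqref{e.monotonicity.homs} says $\ahom$ is nonincreasing and $\ahom_*^{-1}$ is nonincreasing \emph{in the index}, so both $\ahom(\cu_{n-h})\geq\ahom(\cu_n)$ and $\ahom_*^{-1}(\cu_{n-h})\geq\ahom_*^{-1}(\cu_n)$ automatically, and the first alternative is a statement that \emph{neither} grows by more than $(1+\delta)$ over $h$ scales; its failure forces growth by $(1+\delta)$ in at least one, hence in the product over each length-$h$ window. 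Everything else is the elementary estimate $\log(1+\delta)\geq\delta/2$ and counting $\lceil 2\delta^{-1}|\log\sigma|\rceil$ disjoint windows inside $\{0,\ldots,N\}$, which fit by the hypothesis $N \geq \lceil 2\delta^{-1}|\log\sigma|\rceil h$.
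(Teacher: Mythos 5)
Your proposal is correct and follows essentially the same argument as the paper: assume the first alternative fails, note that over each window of $h$ scales at least one of $\ahom$, $\ahom_*^{-1}$ must jump by a factor $(1+\delta)$ while the other cannot decrease by monotonicity \eqref{e.monotonicity.homs}, chain this over $\lceil 2\delta^{-1}|\log\sigma|\rceil$ disjoint windows, and conclude with $\log(1+\delta)\geq\delta/2$. The only cosmetic difference is that you anchor the windows at $N$ and walk down, while the paper anchors them at $0$ and uses monotonicity to compare $\Theta_N$ with $\Theta_{kh}$; the substance is identical.
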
 
\begin{proof} 
Let $k\coloneqq  \lceil 2 \delta^{-1} |\log \sigma|  \rceil$. 
By the monotonicity of~$n\mapsto \ahom(\cu_n)$ and~$n\mapsto \ahom_*^{-1}(\cu_n)$ in~\eqref{e.monotonicity.homs} and~$N\geq kh$, if the first alternative fails, then
\begin{align*}
\Theta_N \Theta_0^{-1} 
&
=
\ahom(\cu_N)  \ahom^{-1} (\cu_0) 
\ahom_*^{-1} (\cu_N)
\ahom_* (\cu_0)
\notag \\ & 
\leq 
\prod_{j=1}^k 
\ahom(\cu_{jh})  \ahom^{-1} (\cu_{(j-1)h}) 
\ahom_*^{-1} (\cu_{jh})
\ahom_* (\cu_{(j-1)h} ) 
\leq 
(1+\delta)^{-k}
\leq \sigma 
\,.
\end{align*}
This completes the proof.  
\end{proof} 

We introduce the  \emph{expected additivity defect} between scales~$k$ and~$m$ by
\begin{align*}
\tau_{n,k}(p,q) 
\coloneqq 
& \
\E \bigl[  J(\cu_k, p,q) \bigr]
-
\E \bigl[ J(\cu_m,p,q) \bigr]
\notag \\ 
= & \
\frac12 p\cdot \bigl( \ahom(\cu_k) - \ahom(\cu_m) \bigr) p 
+
\frac12 q\cdot \bigl( \ahom_*^{-1}(\cu_k) - \ahom_*^{-1}(\cu_m)  \bigr) q
\,. 
\end{align*}
In the next lemma, we give an upper bound for the expectation of~$J(U,p,q)$ in terms of~$\tau_{n,k}(p,q)$ and of weak norms of the gradient and flux of its maximizer. 

\begin{lemma}[Upper bound of $J$ by weak norms]
\label{l.J.upperbound} 
There exists a constant~$C(d)<\infty$ such that, for every~$m,k\in\Z$ with~$k<m$,~$p,q,p_0,q_0\in\Rd$ and~$s,t\in (0,1)$ with~$s+t\leq 1$,  
\begin{align*}
\lefteqn{ 
\E \bigl[ J(\cu_m, p , q )\bigr] - \frac12 p_0 \cdot q_0 
} \ \ & 
\notag \\ &
\leq
C\tau_{n,k}^{\nf12}(p,q)  \E \bigl[ 
J(\cu_k, p,q) 
\bigr]^{\nf12} 
+
C 3^{-(m-k)} 
\E \bigl[ J(\cu_m,p,q) \bigr]
\notag \\ & \quad 
+
C|q_0|  \E \bigl[ 3^{-sm}  \| \nabla v(\cdot,\cu_m,p,q) - p_0 \|_{\Besov{-s}{2}{1}(\cu_m)}\bigr]
+
C|p_0|  \E \bigl[ 3^{-tm} \| \a \nabla v(\cdot,\cu_m,p,q) - q_0 \|_{\Besov{-t}{2}{1}(\cu_m)} \bigr]
\notag \\ & \quad 
+
C
\E \bigl[ 
3^{-2sm} 
\| \nabla v(\cdot,\cu_m,p,q) - p_0  \|_{\Besov{-s}{2}{1}(\cu_m)}^2 
\bigr]^{\nf12} 
\E \bigl[
3^{-2tm} 
\| \a \nabla v(\cdot,\cu_m,p,q) - q_0 \|_{\Besov{-t}{2}{1}(\cu_m)}^2 
\bigr]^{\nf12}  
\,.
\end{align*}
\end{lemma}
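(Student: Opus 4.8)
The plan is to start from the exact energy representation $2J(\cu_m,p,q)=\fint_{\cu_m}\nabla v\cdot\a\nabla v$ of~\eqref{e.J.energy.of.v}, where $v:=v(\cdot,\cu_m,p,q)$, and to insert the two free constants $p_0,q_0$. Writing $\nabla v=p_0+(\nabla v-p_0)$ and $\a\nabla v=q_0+(\a\nabla v-q_0)$ and expanding the product (the cross terms with a constant factor survive as boundary terms of the global average) gives the exact identity
\begin{equation*}
2J(\cu_m,p,q)-p_0\cdot q_0
=q_0\cdot(\nabla v-p_0)_{\cu_m}
+p_0\cdot(\a\nabla v-q_0)_{\cu_m}
+\fint_{\cu_m}(\nabla v-p_0)\cdot(\a\nabla v-q_0)\,.
\end{equation*}
The first two terms are the ``linear'' corrections and are essentially free: the top ($k=m$) slice of the sum defining $[\cdot]_{\Besov{-s}{2}{1}(\cu_m)}$ in~\eqref{e.Bs.minus.seminorm.explicit} is exactly $3^{sm}$ times the cube average, so $|(\nabla v-p_0)_{\cu_m}|\le 3^{-sm}[\nabla v-p_0]_{\Besov{-s}{2}{1}(\cu_m)}$ and similarly for the flux with exponent $t$; multiplying by $|q_0|$, resp.\ $|p_0|$, and taking expectations produces the third and fourth terms on the right-hand side of the lemma.

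The heart of the matter is the last, compensated-compactness, term: $\nabla v-p_0$ is curl-free and $\a\nabla v-q_0$ is divergence-free. I would localize it at a mesoscopic scale $k<m$, writing $\fint_{\cu_m}(\nabla v-p_0)\cdot(\a\nabla v-q_0)=\avsum_{z\in 3^k\Zd\cap\cu_m}\fint_{z+\cu_k}(\nabla v-p_0)\cdot(\a\nabla v-q_0)$, and on each subcube splitting off the subcube means,
\begin{equation*}
\fint_{z+\cu_k}(\nabla v-p_0)\cdot(\a\nabla v-q_0)
=(\bar\eta_z-p_0)\cdot(\bar\xi_z-q_0)
+\fint_{z+\cu_k}(\nabla v-\bar\eta_z)\cdot(\a\nabla v-\bar\xi_z)\,,
\end{equation*}
with $\bar\eta_z:=(\nabla v)_{z+\cu_k}$ and $\bar\xi_z:=(\a\nabla v)_{z+\cu_k}$. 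For the oscillation integral the two factors are mean-zero on $z+\cu_k$, one curl-free and one divergence-free; a div--curl estimate in the spirit of~\eqref{e.divcurl.est1} combined with the duality pairing~\eqref{e.duality.Bs}, applied cube-by-cube and then summed in $z$, bounds $\avsum_z|\cdots|$ by a product of subcube negative-Besov seminorms which reassemble (via~\eqref{e.weak.norms.ordering} and the definition~\eqref{e.Bs.minus.seminorm.explicit}) into a constant times $3^{-sm}[\nabla v-p_0]_{\Besov{-s}{2}{1}(\cu_m)}\cdot 3^{-tm}[\a\nabla v-q_0]_{\Besov{-t}{2}{1}(\cu_m)}$. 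Taking expectations and applying the Cauchy--Schwarz inequality in $\omega$ then yields the fifth term of the lemma.

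It remains to handle the mean sum $\avsum_z(\bar\eta_z-p_0)\cdot(\bar\xi_z-q_0)$, and this is where the subadditive defect $\tau_{m,k}$ enters. I would compare $v$ restricted to $z+\cu_k$ (which lies in $\A(z+\cu_k)$) with the subcube maximizer $v_z:=v(\cdot,z+\cu_k,p,q)$: by~\eqref{e.J.second.var} on $z+\cu_k$, summation, and~\eqref{e.subadditivity}, $\E\bigl[\avsum_z\|\a^{\nf12}(\nabla v-\nabla v_z)\|_{\underline{L}^2(z+\cu_k)}^2\bigr]=2\tau_{m,k}(p,q)$, while $\E\bigl[\avsum_z\|\a^{\nf12}\nabla v_z\|_{\underline{L}^2(z+\cu_k)}^2\bigr]=2\E[J(\cu_k,p,q)]$ by~\eqref{e.J.energy.of.v} and stationarity. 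Replacing $\bar\eta_z,\bar\xi_z$ by the $v_z$-means $(\nabla v_z)_{z+\cu_k},(\a\nabla v_z)_{z+\cu_k}$ — these last being given \emph{exactly} by the block-response identities~\eqref{e.exact.response} as $\a_*^{-1}(z+\cu_k)q-p$ and $q-\a(z+\cu_k)p$, and the discrepancies being controlled in $L^2$ by the $\a$-energy of $v-v_z$ through~\eqref{e.energymaps}--\eqref{e.energymaps.flux} — the purely-$v_z$ contribution is evaluated explicitly and, after taking expectations over the partition, is absorbed into the left side and the linear terms already accounted for, while the discrepancy contributions are bounded via Cauchy--Schwarz by $C\tau_{m,k}^{\nf12}(p,q)\,\E[J(\cu_k,p,q)]^{\nf12}$, the first term. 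Finally, the subcubes $z+\cu_k$ meeting $\partial\cu_m$, a fraction $O(3^{-(m-k)})$ of the total, are estimated crudely by the energy of $v$ on all of $\cu_m$, i.e.\ by $2J(\cu_m,p,q)$, giving the boundary-layer term $C3^{-(m-k)}\E[J(\cu_m,p,q)]$.

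The step I expect to be the main obstacle is the div--curl analysis at the mesoscopic scale: converting the cancellation between the curl-free and divergence-free factors into a genuine quantitative bound by the \emph{negative}-regularity seminorms of \emph{both} factors — rather than, say, an $L^2$ norm of one of them, which would re-introduce a dependence on the coarse-grained ellipticity constants absent from the statement — and simultaneously keeping the boundary-layer error exactly of the size $C3^{-(m-k)}\E[J(\cu_m,p,q)]$ and no worse. Relatedly, one must be careful that $v\vert_{z+\cu_k}$ is only $\a$-harmonic, not the subcube maximizer, so the comparison with $v_z$ must be charged to $\tau_{m,k}$ through~\eqref{e.J.second.var} and~\eqref{e.energymaps}, never through a pointwise ellipticity bound; and the mesoscopic $k$ has to be chosen so that $3^{-(m-k)}$ is small while the reassembly of the subcube seminorms into the full $\Besov{-s}{2}{1}(\cu_m)$ seminorms remains lossless.
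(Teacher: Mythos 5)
Your bookkeeping of the $\tau$-term (comparing $v\vert_{z+\cu_k}$ with the subcube maximizer via \eqref{e.J.second.var} and subadditivity, so that the energy of the difference has expectation $\tau_{m,k}$) and of the linear $|p_0|,|q_0|$ terms (top slice of the $\Besov{-s}{2}{1}$ norm) matches the paper, but the central step --- bounding the quadratic term by the product of the two \emph{global} weak norms with a constant $C(d)$ --- does not go through in the cube-by-cube form you propose, and this is a genuine gap. For the mean part, the best the scale-$k$ slice of \eqref{e.Bs.minus.seminorm.explicit} gives, via Cauchy--Schwarz in $z$, is $\bigl|\avsum_z(\bar\eta_z-p_0)\cdot(\bar\xi_z-q_0)\bigr|\le 3^{-(s+t)k}\,[\nabla v-p_0]_{\Besov{-s}{2}{1}(\cu_m)}\,[\a\nabla v-q_0]_{\Besov{-t}{2}{1}(\cu_m)}$, which exceeds the target $3^{-sm}[\cdot]\cdot 3^{-tm}[\cdot]$ by the factor $3^{(s+t)(m-k)}$; the same loss appears when you try to reassemble subcube seminorms on the cubes $z+\cu_k$ into the seminorms on $\cu_m$ (the reassembly is not lossless, it costs exactly this factor). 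Since the lemma is claimed for every $k<m$ and is applied with $m-k$ of order $m$ and $s+t$ close to $1$, this loss is fatal. The div--curl cancellation itself also cannot be run subcube-by-subcube: without a cutoff the integration by parts on $z+\cu_k$ produces uncontrolled boundary terms, and a cutoff supported in $z+\cu_k$ oscillates by $O(1)$ on that cube, so the analogue of the cutoff error becomes $O(1)\cdot J(\cu_m,p,q)$ rather than $O(3^{-(m-k)})J(\cu_m,p,q)$. Finally, replacing $\bar\eta_z,\bar\xi_z$ by the exact responses $\a_*^{-1}(z+\cu_k)q-p$ and $q-\a(z+\cu_k)p$ leaves $\avsum_z(\a_*^{-1}(z+\cu_k)q-p-p_0)\cdot(q-\a(z+\cu_k)p-q_0)$, whose expectation involves correlations such as $\E[\a_*^{-1}(\cu_k)\a(\cu_k)]$ (or, after Cauchy--Schwarz, coarse-grained ellipticity constants); none of these appears on the right side of the lemma, and ``absorbing into the left side'' is not available because the left side is $\E[J(\cu_m,p,q)]-\frac12 p_0\cdot q_0$, not a quantity at scale $k$.

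The paper's proof avoids all of this with a single device: a nonnegative $\varphi\in C^\infty_c(\cu_m)$ with $(\varphi)_{\cu_m}=1$ and $\|\varphi\|_{L^\infty(\cu_m)}+3^m\|\nabla\varphi\|_{L^\infty(\cu_m)}\le C$. The quadratic term is kept \emph{global}, $\fint_{\cu_m}\frac12\varphi(\nabla v_m-p_0)\cdot(\a\nabla v_m-q_0)$, integrated by parts using the equation (no boundary terms, thanks to the compact support), and then estimated by \eqref{e.duality.Bs} together with \eqref{e.divcurl.est1}, which directly yield $C\,3^{-sm}\|\nabla v_m-p_0\|_{\Besov{-s}{2}{1}(\cu_m)}\,3^{-tm}\|\a\nabla v_m-q_0\|_{\Besov{-t}{2}{1}(\cu_m)}$ with no scale loss. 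The mesoscale $k$ then enters only twice: through the cutoff-oscillation error, bounded by $C3^{-(m-k)}J(\cu_m,p,q)$ because $\varphi$ varies on scale $3^m$, and through the comparison with the subcube maximizers, which comes weighted by the mean-zero weights $1-(\varphi)_{z+\cu_k}$; by stationarity $\E[J(z+\cu_k,p,q)]$ does not depend on $z$, so $\E\bigl[\avsum_z(1-(\varphi)_{z+\cu_k})J(z+\cu_k,p,q)\bigr]=0$ and only the cross term survives, controlled via Cauchy--Schwarz by $C\tau_{m,k}^{1/2}\,\E[J(\cu_k,p,q)]^{1/2}$. To salvage your scheme you would need this global integration by parts (or an equivalent global div--curl mechanism capturing all scales between $k$ and $m$ at once), not a subcube-by-subcube one.
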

\begin{proof}
Denote~$v_m \coloneqq v(\cdot,\cu_m,p,q)$ and~$v_{k,z}\coloneqq v(\cdot,z+\cu+k,p,q)$. 
Select a nonnegative, smooth function~$\varphi\in C^\infty_c(\cu_m)$ satisfying 
\begin{equation*}
(\varphi)_{\cu_m} = 1
\qquad \mbox{and} \qquad 
\| \varphi \|_{L^\infty(\cu_m)} 
+ 
3^m 
\| \nabla \varphi \|_{L^\infty(\cu_m)} 
\leq C \,.
\end{equation*}
By~\eqref{e.J.energy.of.v},~\eqref{e.J.second.var} and~$(\varphi)_{\cu_m} = 1$, we have the identity
\begin{align}
\lefteqn{
J(\cu_m,p,q) - \frac12p_0 \cdot q_0 
=
\fint_{\cu_m} 
\frac12 \nabla v_m \cdot \a\nabla v_m - \frac12p_0 \cdot q_0
} \qquad &
\notag \\ & 
=
\fint_{\cu_m} 
\frac12 \varphi \bigl( \nabla v_m - p_0) 
\cdot ( \a\nabla v_m - q_0) 
+
\avsum_{z\in 3^k\Zd\cap \cu_m}
\fint_{z+\cu_k} 
\frac12 
\bigl( ( \varphi )_{z+\cu_k} - \varphi  \bigr)
\nabla v_m \cdot \a\nabla v_m
\notag \\ & \qquad 
+
\avsum_{z\in 3^k\Zd\cap \cu_m} \! \! \! 
\bigl( 1 - ( \varphi )_{z+\cu_k} \bigr) 
 \fint_{z+\cu_k} \frac12 \bigl(
( \nabla v_m - \nabla v_{k,z} ) \cdot \a ( \nabla v_m + \nabla v_{k,z} ) +   \nabla v_{k,z} \cdot \a  \nabla v_{k,z}  \bigr)
\notag \\ & \qquad 
+ 
\frac12 q_0 \cdot \fint_{\cu_m} \varphi( \nabla v_m - p_0) 
+
\frac12 p_0 \cdot \fint_{\cu_m} \varphi( \a \nabla v_m - q_0) 
\,.
\label{e.centered.J.splitting}
\end{align}
We estimate the first term on the right side of~\eqref{e.centered.J.splitting} by using the equation for~$v_m$ and integrating by parts (this is the point of introducing the cutoff function~$\varphi$). Applying~\eqref{e.duality.Bs},~\eqref{e.divcurl.est1} and the assumption that~$s+t\leq1$, we get
\begin{align*}
\fint_{\cu_m} 
\frac12 \varphi \bigl( \nabla v_m - p_0) 
\cdot ( \a\nabla v_m - q_0) 
&
=
-
\fint_{\cu_m} 
\frac12
( v_m - \ell_{p_0}) 
\nabla \varphi
\cdot ( \a\nabla v_m - q_0) 
\notag \\ & 
\leq 
C
\| \nabla \varphi( v_m - \ell_{p_0}) \|_{\Besovul{1-s}{2}{\infty}(\cu_m)}
\| \a \nabla v_m - q_0 \|_{\Besov{s-1}{2}{1}(\cu_m)}
\notag \\ & 
\leq 
C3^{-m}
\| \nabla v_m - p_0   \|_{\Besov{-s}{2}{1}(\cu_m)}
\| \a \nabla v_m - q_0 \|_{\Besov{s-1}{2}{1}(\cu_m)}
\notag \\ & 
\leq 
C3^{-sm}
\| \nabla v_m - p_0  \|_{\Besov{-s}{2}{1}(\cu_m)}
3^{-tm} 
\| \a \nabla v_m - q_0 \|_{\Besov{-t}{2}{1}(\cu_m)}
\,.
\end{align*}
We estimate the second term on the right side of~\eqref{e.centered.J.splitting} by
\begin{align*}
\lefteqn{ 
\biggl| 
\avsum_{z\in 3^k\Zd\cap \cu_m}
\fint_{z+\cu_k} 
\frac12 
\bigl( ( \varphi )_{z+\cu_k} - \varphi  \bigr)
\nabla v_m \cdot \a\nabla v_m
\biggr| 
} \qquad & 
\notag \\ &  
\leq 
\max_{z\in 3^k\Zd\cap \cu_m}
\bigl\| \varphi - ( \varphi )_{z+\cu_k}  \bigr\|_{L^\infty(z+\cu_k)} 
\fint_{\cu_m} 
\frac12 
\nabla v_m \cdot \a\nabla v_m
\leq
C 3^{-(m-k)} 
J(\cu_m,p,q) 
\,.
\end{align*}
Using Cauchy-Schwarz,~\eqref{e.J.energy.of.v} and~\eqref{e.subadditivity}, we estimate the fourth term on the right side of~\eqref{e.centered.J.splitting} by
\begin{align*}
\lefteqn{ 
\biggl| 
\avsum_{z\in 3^k\Zd\cap \cu_m} 
\bigl( 1 - ( \varphi )_{z+\cu_k} \bigr) 
\fint_{z+\cu_k} \frac12
( \nabla v_m - \nabla v_{k,z} ) \cdot \a ( \nabla v_m + \nabla v_{k,z} )
\biggr| 
} 
\qquad & 
\notag \\ & 
\leq 
\| \varphi \|_{L^\infty(\cu_m)} 
\biggl( 
\avsum_{z\in 3^k\Zd\cap \cu_m} 
\| \a^{\nf12} ( \nabla v_m - \nabla v_{k,z} ) \|_{\underline{L}^2(z+\cu_k)}^2 
\biggr)^{\!\nf12}
\notag \\ & \qquad \times  
\biggl( 
\| \a^{\nf12} \nabla v_m\|_{\underline{L}^2(\cu_m)}^2  
+
\avsum_{z\in 3^k\Zd\cap \cu_m} 
\| \a^{\nf12}\nabla v_{k,z}  \|_{\underline{L}^2(z+\cu_k)}^2
\biggr)^{\!\nf12} 
\notag \\ & 
\leq 
C
\biggl( 
\avsum_{z\in 3^k \Zd\cap \cu_m}
J(z+\cu_k, p,q) 
-
J(\cu_m,p,q)  
\biggr)^{\!\nf12} 
\biggl( \avsum_{z\in 3^k \Zd\cap \cu_m}
J(z+\cu_k, p,q) 
\biggr)^{\!\nf12}\,.
\end{align*}
Using~\eqref{e.duality.Bs}, we estimate the two terms on the last line of~\eqref{e.centered.J.splitting}, by 
\begin{align*}
\biggl| \frac12 q_0 \cdot \fint_{\cu_m} \varphi( \nabla v_m - p_0) 
\biggr| 
& 
\leq 
C |q_0| 
\| \varphi \|_{\Besovul{s}{2}{\infty}(\cu_m)}
\| \nabla v_m - p_0 \|_{\Besov{-s}{2}{1}(\cu_m)} 
\leq 
C|q_0| 3^{-sm} \| \nabla v_m - p_0 \|_{\Besov{-s}{2}{1}(\cu_m)} 
\end{align*}
and, similarly, 
\begin{equation*}
\biggl| 
\frac12 p_0 \cdot \fint_{\cu_m} \varphi( \a \nabla v_m - q_0) 
\biggr| 
\leq 
C|p_0| 3^{-tm} \| \a \nabla v_m - q_0 \|_{\Besov{-t}{2}{1}(\cu_m)}
\,.
\end{equation*}
Combining the above with~\eqref{e.centered.J.splitting} yields, using again~\eqref{e.J.energy.of.v}, 
\begin{align}
\lefteqn{ 
\biggl|
J(\cu_m,p,q) - \frac12p_0 \cdot q_0 
-
\avsum_{z\in 3^k\Zd\cap \cu_m} 
\bigl( 1 - ( \varphi )_{z+\cu_k} \bigr) 
J(z+\cu_k, p,q) 
\biggr|
} 
\qquad &   
\notag \\ & 
\leq
C
\biggl( 
\avsum_{z\in 3^k \Zd\cap \cu_m}
J(z+\cu_k, p,q) 
-
J(\cu_m,p,q)  
\biggr)^{\!\nf12} 
\!
\biggl( \avsum_{z\in 3^k \Zd\cap \cu_m}
J(z+\cu_k, p,q) 
\biggr)^{\!\nf12}
\notag \\ & \qquad 
+
C 3^{-(m-k)} 
J(\cu_m,p,q) 
+
C3^{-sm}
\| \nabla v_m - p_0  \|_{\Besov{-s}{2}{1}(\cu_m)}
3^{-tm} 
\| \a \nabla v_m - q_0 \|_{\Besov{-t}{2}{1}(\cu_m)}
\notag \\ & \qquad
+
C|q_0| 3^{-sm} \| \nabla v_m - p_0 \|_{\Besov{-s}{2}{1}(\cu_m)} 
+
C|p_0| 3^{-tm} \| \a \nabla v_m - q_0 \|_{\Besov{-t}{2}{1}(\cu_m)}
\,.
\label{e.Jtilde.energy.bound.no.E}
\end{align}
By taking the expectation, using~$(\varphi)_{\cu_m} = 1$,~\eqref{e.J.energy.of.v} and stationarity assumption~\ref{a.stationarity}, we obtain 
\begin{equation*}
\E \biggl[
\avsum_{z\in 3^k\Zd\cap \cu_m} 
\bigl( 1 - ( \varphi )_{z+\cu_k} \bigr) 
J(z+\cu_k, p,q)  
\biggr] = 
\E \bigl[ J(\cu_k,p,q) \bigr] 
\avsum_{z\in 3^k\Zd\cap \cu_m} \! \! \! 
\bigl( 1 - ( \varphi )_{z+\cu_k} \bigr) 
= 0\,,
\end{equation*}
and thus also the lemma. 
\end{proof}

Motivated by the previous lemma, we estimate the weak norms of the gradient and flux of the maximizer of~$J(\cu_m,p,q)$. 

\begin{lemma}[Weak norm estimates]
\label{l.weaknorms}
There exists~$C(d)<\infty$ such that, for every~$s,s' \in (0,1]$ with~$s' \in [\frac12 s,s]$,~$m,k\in\N$ with~$k<m$ and~$p,q,p_0,q_0\in\Rd$, we have the estimates
\begin{align}
\lefteqn{
3^{-sm} 
\| \nabla v(\cdot,\cu_m,p,q) - p_0  \|_{\Besov{-s}{2}{1}(\cu_m)} 
} \qquad & 
\notag \\ &
\leq 
\sum_{j=k+1}^m 
3^{-s(m-j)}
\biggl(
\avsum_{z\in 3^j\Zd \cap \cu_m}
\bigl| (\a_*^{-1}(z+\cu_j) q - p ) - p_0 \bigr |^2
\biggr)^{\!\nicefrac 12}
\notag \\ & \qquad 
+
C
\lambda_{s',1}^{-\nf12} (\cu_m) 
\sum_{j=k+1}^m 
3^{-(s-s') (m-j)}
\biggl( \avsum_{z\in 3^j\Zd \cap \cu_m}
J(z+\cu_j,p,q) 
-
J(\cu_m,p,q)
\biggr)^{\!\nf12}
\notag \\ & \qquad 
+
C s^{-\nf 12} 3^{-(s-s')(m-k)} 
\lambda_{s',1}^{-\nf12} (\cu_m) 
J(\cu_m,p,q)^{\nf12} 
+
C s^{-1} 3^{-s(m-k)} |p_0| 
\label{e.weak.norms.gradient.bound}
\end{align}
and
\begin{align}
\lefteqn{
3^{-sm} 
\| \a\nabla v(\cdot,\cu_m,p,q) - q_0  \|_{\Besov{-s}{2}{1}(\cu_m)} 
} \qquad & 
\notag \\ &
\leq 
\sum_{j=k+1}^m 
3^{-s(m-j)}
\biggl(
\avsum_{z\in 3^j\Zd \cap \cu_m}
\bigl|  ( q - \a(z+\cu_j) p ) - q_0 \bigr |^2
\biggr)^{\!\nicefrac 12}
\notag \\ & \qquad 
+
C
\Lambda_{s',1}^{\nf12} (\cu_m) 
\sum_{j=k+1}^m 
3^{-(s-s') (m-j)}
\biggl( 
\avsum_{z\in 3^j\Zd \cap \cu_m}
J(z+\cu_j,p,q) 
-
J(\cu_m,p,q)
\biggr)^{\!\nf12}
\notag \\ & \qquad 
+
Cs^{-\nf 12} 3^{-(s-s')(m-k)}  
\Lambda_{s',1}^{\nf12} (\cu_m) 
J(\cu_m,p,q)^{\nf12} 
+
C s^{-1} 3^{-s(m-k)} |q_0| 
\,.
\label{e.weak.norms.flux.bound}
\end{align}
\end{lemma}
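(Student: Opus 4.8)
The plan is to expand the left side of~\eqref{e.weak.norms.gradient.bound} as a sum over the triadic partition of~$\cu_m$,
\[
3^{-sm}\bigl\|\nabla v_m - p_0\bigr\|_{\Besov{-s}{2}{1}(\cu_m)}
=
\sum_{j=-\infty}^{m} 3^{-s(m-j)}\Bigl(\avsum_{z\in 3^j\Zd\cap\cu_m}\bigl|(\nabla v_m - p_0)_{z+\cu_j}\bigr|^2\Bigr)^{\!\nf12}\,,
\]
where $v_m\coloneqq v(\cdot,\cu_m,p,q)$, and to estimate the scales $j>k$ and $j\le k$ separately. On the coarse scales $j>k$ I would compare~$v_m$ on each subcube $z+\cu_j$ with the subcube maximizer $v_{j,z}\coloneqq v(\cdot,z+\cu_j,p,q)$, writing
\[
(\nabla v_m-p_0)_{z+\cu_j} = \bigl((\nabla v_{j,z})_{z+\cu_j}-p_0\bigr) + \bigl(\nabla(v_m-v_{j,z})\bigr)_{z+\cu_j}\,.
\]
The first piece is explicit: using the linearity of $(p,q)\mapsto v(\cdot,U,p,q)$, the exact-response identities~\eqref{e.exact.response}, and the fact that $v(\cdot,U,p,0)$ and $v(\cdot,U,0,q)$ solve the affine Dirichlet and Neumann problems (so $\fint_U\nabla v(\cdot,U,p,0)=-p$ and $\fint_U\a\nabla v(\cdot,U,0,q)=q$), one finds $(\nabla v_{j,z})_{z+\cu_j}=\a_*^{-1}(z+\cu_j)q-p$; summing this contribution over $j=k+1,\dots,m$ reproduces exactly the first sum on the right side of~\eqref{e.weak.norms.gradient.bound}.

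For the second piece, $v_m-v_{j,z}$ is $\a$-harmonic in $z+\cu_j$, so~\eqref{e.energymaps} gives $\bigl|(\nabla(v_m-v_{j,z}))_{z+\cu_j}\bigr|^2\le|\a_*^{-1}(z+\cu_j)|\fint_{z+\cu_j}\nabla(v_m-v_{j,z})\cdot\a\nabla(v_m-v_{j,z})$. Averaging over the $z$ and using~\eqref{e.J.second.var} and~\eqref{e.J.energy.of.v}---which identify $\avsum_z\fint_{z+\cu_j}\frac12\nabla(v_m-v_{j,z})\cdot\a\nabla(v_m-v_{j,z})$ with the subadditivity defect $\avsum_z J(z+\cu_j,p,q)-J(\cu_m,p,q)$---and then pulling $\max_z|\a_*^{-1}(z+\cu_j)|^{\nf12}$ out of the average, it remains to bound this maximum by $3^{s'(m-j)}\lambda_{s',1}^{-\nf12}(\cu_m)$; this follows from Definition~\ref{def.cg.ellipticity} together with the monotonicity of $k\mapsto\max_z|\a_*^{-1}(z+\cu_k)|$ noted just below it. Writing $3^{-s(m-j)}=3^{-(s-s')(m-j)}3^{-s'(m-j)}$ converts the $3^{s'(m-j)}$ growth into the decay $3^{-(s-s')(m-j)}$, and summing over $j=k+1,\dots,m$ yields the second sum on the right side of~\eqref{e.weak.norms.gradient.bound}.

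On the fine scales $j\le k$ I would first separate out~$p_0$, whose contribution $|p_0|\sum_{j\le k}3^{-s(m-j)}$ is a geometric series dominated by $Cs^{-1}3^{-s(m-k)}|p_0|$, the last term. For the remainder, $\sum_{j\le k}3^{-s(m-j)}(\avsum_z|(\nabla v_m)_{z+\cu_j}|^2)^{\nf12}$, I would use $3^{-s(m-j)}\le 3^{-(s-s')(m-k)}3^{-s'(m-j)}$ (valid because $j\le k$ and $s\ge s'$) to bound it by $3^{-(s-s')(m-k)}$ times the full seminorm $3^{-s'm}\|\nabla v_m\|_{\Besov{-s'}{2}{1}(\cu_m)}$, which by the coarse-grained Poincar\'e inequality (first line of~\eqref{e.cg.Poincare}) and $\|\a^{\nf12}\nabla v_m\|_{\underline{L}^2(\cu_m)}^2=2J(\cu_m,p,q)$ from~\eqref{e.J.energy.of.v} is at most an $s'$-dependent constant times $3^{-(s-s')(m-k)}\lambda_{s',1}^{-\nf12}(\cu_m)J(\cu_m,p,q)^{\nf12}$, the third term. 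Adding the three contributions over all scales yields~\eqref{e.weak.norms.gradient.bound}. The flux bound~\eqref{e.weak.norms.flux.bound} is proved by the identical scheme, replacing~\eqref{e.energymaps} and the first line of~\eqref{e.cg.Poincare} by their flux analogues~\eqref{e.energymaps.flux} and the second line of~\eqref{e.cg.Poincare}, and using $(\a\nabla v_{j,z})_{z+\cu_j}=q-\a(z+\cu_j)p$ in place of the Dirichlet response---the one point to verify being that~\eqref{e.energymaps.flux} needs $\a$-harmonicity, which holds both for $v_m$ and for the differences $v_m-v_{j,z}$. I expect the only real difficulty to be bookkeeping: splitting the multiscale sum cleanly at the cutoff scale~$k$ and balancing the regularity exponent~$s'$ spent in the coarse-grained Poincar\'e inequality against the spatial decay $s-s'$ retained. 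All of the analytic input is already contained in~\eqref{e.exact.response}, \eqref{e.J.second.var}, \eqref{e.J.energy.of.v}, the energy inequalities~\eqref{e.energymaps}--\eqref{e.energymaps.flux}, and Proposition~\ref{p.cg.Poincare}.
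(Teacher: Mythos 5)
Your argument is correct and is essentially the paper's own proof: the same splitting of the multiscale sum at the cutoff scale~$k$, the same identification $(\nabla v_{j,z})_{z+\cu_j}=\a_*^{-1}(z+\cu_j)q-p$ (resp.\ $(\a\nabla v_{j,z})_{z+\cu_j}=q-\a(z+\cu_j)p$) from the exact-response identities, the same conversion of $v_m-v_{j,z}$ into the subadditivity defect via \eqref{e.J.second.var}--\eqref{e.J.energy.of.v} combined with \eqref{e.energymaps}/\eqref{e.energymaps.flux}, and the same handling of the fine scales and of $p_0,q_0$ via the definition of the coarse-grained ellipticity constants. The only discrepancy is in constant-tracking: your fine-scale step (equivalently, the $\css{s'}^{-1}$ prefactor in Proposition~\ref{p.cg.Poincare}) yields $Cs^{-1}$ rather than the stated $Cs^{-\nicefrac12}$ in the third term, which is what the paper's own compressed final step also produces and is immaterial for how the lemma is applied.
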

\begin{proof}
Denote~$v_m \coloneqq v(\cdot,\cu_m,p,q)$ and~$v_{j,z}\coloneqq v(\cdot,j+\cu+k,p,q)$ for~$j<m$. 
We compute
\begin{align*}
3^{-sm} 
\| \nabla v_m - p_0  \|_{\Besov{-s}{2}{1}(\cu_m)} 
&
=
\sum_{j=-\infty}^m
3^{-s (m-j)}
\biggl(
\avsum_{z\in 3^j\Zd \cap \cu_m}
\bigl| (\nabla v_m )_{z+\cu_j} - p_0 \bigr |^2
\biggr)^{\!\nicefrac 12}
\notag \\ & 
\leq
\sum_{j=k+1}^m 
3^{-s(m-j)} 
\biggl(
\avsum_{z\in 3^j\Zd \cap \cu_m}
\bigl| (\nabla v_m )_{z+\cu_j} - p_0 \bigr |^2
\biggr)^{\!\nicefrac 12}
\notag \\ & \quad
+
\sum_{j=-\infty}^{k}
3^{-s(m-j)} 
\biggl(
\avsum_{z\in 3^j\Zd \cap \cu_m}
\bigl| (\nabla v_m )_{z+\cu_j} \bigr|^2 
\biggr)^{\!\nicefrac 12}
+ Cs^{-1} 3^{-s(m-k)} |p_0| 
\,.
\end{align*}
To bound the first term on the right side, we use that, for each~$j \in \Z\cap(-\infty, m)$, 
\begin{align*}
\biggl(
\avsum_{z\in 3^j\Zd \cap \cu_m}
\bigl| (\nabla v_m)_{z+\cu_j} - p_0 \bigr |^2
\biggr)^{\!\nicefrac 12}
&
\leq 
\biggl(
\avsum_{z\in 3^j\Zd \cap \cu_m}
\bigl| (\a_*^{-1}(z+\cu_j) q - p ) - p_0 \bigr |^2
\biggr)^{\!\nicefrac 12}
\notag \\ & \qquad 
+
\biggl(
\avsum_{z\in 3^j\Zd \cap \cu_m}
\bigl| (\nabla v_m )_{z+\cu_j} - (\nabla v_{j,z} )_{z+\cu_j} \bigr |^2
\biggr)^{\!\nicefrac 12}
\end{align*}
and, by~\eqref{e.energymaps},
\begin{align*}
\avsum_{z\in 3^j\Zd \cap \cu_m}
\bigl| (\nabla v_m - \nabla v_{j,z} )_{z+\cu_j} \bigr |^2
&
\leq 
\avsum_{z\in 3^j\Zd \cap \cu_m}
| \a_*^{-1} (z{+}\cu_j) |
\| \a^{\nf12} (\nabla v_m - \nabla v_{j,z} ) \|_{\underline{L}^2(z+\cu_j)}^2
\notag \\ & 
\leq 
\max_{z\in 3^j\Zd \cap \cu_m}
\!
| \a_*^{-1} (z{+}\cu_j) |
\biggl( \avsum_{z\in 3^j\Zd \cap \cu_m}
J(z{+}\cu_j,p,q) 
-
J(\cu_m,p,q)
\biggr)
\,.
\end{align*}
To bound the second term on the right side, we use 
\begin{align*}
\avsum_{z\in 3^j\Zd \cap \cu_m}
\bigl| (\nabla v_m )_{z+\cu_j} \bigr |^2
&
\leq 
\avsum_{z\in 3^j\Zd \cap \cu_m}
| \a_*^{-1} (z+\cu_j) |
\| \a^{\nf12}\nabla v_m \|_{\underline{L}^2(z+\cu_j)}^2
\notag \\ & 
\leq 
\max_{z\in 3^j\Zd \cap \cu_m}
| \a_*^{-1} (z+\cu_j) |
\| \a^{\nf12}\nabla v_m \|_{\underline{L}^2(\cu_m)}^2
\,.
\end{align*}
Combining these with the definition~\eqref{e.coarse.grained.ellipticity} and~$\lambda_{s',\infty}^{-1} \leq  \lambda_{s',1}^{-1}$ yields~\eqref{e.weak.norms.gradient.bound}. The proof of~\eqref{e.weak.norms.flux.bound} is entirely analogous, using~\eqref{e.energymaps.flux} in place of~\eqref{e.energymaps}, so we omit it. 
\end{proof}

We next show that the quantity~$\tilde{\Theta}_m$ can be controlled by~$\Theta_0$ and a scale-separation error. 

\begin{lemma} 
\label{l.lambda.bounds}
There exists a constant~$C(d)<\infty$ such that, for every~$s \in (0,1)$ and~$m \in \N$, we have
\begin{equation} 
\label{e.lambda.moment.bound}
\E \Bigl[ \bigl(\lambda_{s,1}^{-1} (\cu_m) -\ahom_*^{-1}(\cu_0)   \bigr)_+^{\xi} \Bigr]^{\nf 1\xi}
\leq 
\frac{ C \xi }{d-2s}   3^{-(s-\frac d\xi) m} 
\E\bigl[ \lambda_{s,1}^{-\xi} (\cu_0) \bigr]^{\nf 1\xi} 
\end{equation}
and
\begin{equation} 
\label{e.Lambda.moment.bound}
\E \Bigl[ \bigl( \Lambda_{s,1} (\cu_m) - \ahom(\cu_0)  \bigr)_+^\xi \Bigr]^{\nf 1\xi}
\leq
\frac{ C \xi }{d-2s}   3^{-(s-\frac d\xi) m} 
\E\bigl[ \Lambda_{s,1}^{\xi} (\cu_0) \bigr]^{\nf 1\xi} 
\,.
\end{equation}
In particular, 
\begin{equation} 
\label{e.tilde.Theta.vs.Theta}
\tilde{\Theta}_m
\leq
\Theta_0 
+
C\xi^2 3^{-(\specs_1 \wedge \specs_2-\frac d\xi) m} 
\tilde{\Theta}_0
\,.
\end{equation}
\end{lemma}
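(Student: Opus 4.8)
The plan is to prove the two moment bounds \eqref{e.lambda.moment.bound} and \eqref{e.Lambda.moment.bound} by exploiting the structure of the coarse-grained ellipticity constants as scale-discounted sums, together with subadditivity and the unit range of dependence. I will treat \eqref{e.lambda.moment.bound} in detail; \eqref{e.Lambda.moment.bound} is symmetric (replace $\a_*^{-1}$ by $\a$ and reverse the monotonicity direction). The starting point is to split the defining sum for $\lambda_{s,1}^{-1}(\cu_m)$ at a crossover scale. For the coarsest scales $k$ close to $m$, each block $z+\cu_k$ has $|\a_*^{-1}(z+\cu_k)|$ close to its annealed value $\ahom_*^{-1}(\cu_k) \le \ahom_*^{-1}(\cu_0)$ by a law-of-large-numbers/concentration estimate (this is where finite range of dependence \ref{a.frd} enters: $z+\cu_k$ contains $\sim 3^{d(k-0)}$ unit cells, so fluctuations of the block average of $\a_*^{-1}$ around $\ahom_*^{-1}(\cu_k)$ are controlled). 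For the finer scales, one pays a geometric factor $3^{-s(m-k)}$ which beats the number $\sim 3^{d(m-k)}$ of terms in the max only after dividing by the discount—here the constraint $d-2s>0$ (equivalently $s<d/2$, automatic since $s<1\le d/2$ for $d\ge 2$) and the factor $3^{-(s-d/\xi)m}$ appear, with the extra $d/\xi$ coming from converting an $L^\infty$-in-space maximum over $\sim 3^{dk}$ blocks into an $L^\xi$ bound via a union bound (a standard $\ell^\infty \le 3^{dk/\xi}\ell^\xi$ trade-off).

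Concretely, I would write $\lambda_{s,1}^{-1}(\cu_m) = \css{s}\sum_{k\le m} 3^{-s(m-k)} M_k$ with $M_k \coloneqq \max_{z} |\a_*^{-1}(z+\cu_k)|$, and bound
$(\lambda_{s,1}^{-1}(\cu_m) - \ahom_*^{-1}(\cu_0))_+ \le \css{s}\sum_{k\le m} 3^{-s(m-k)}(M_k - \ahom_*^{-1}(\cu_0))_+$
using $\css{s}\sum 3^{-s(m-k)}=1$ and that the unsubtracted sum telescopes against the constant. Then apply the triangle inequality in $L^\xi(\P)$:
$\E[(\lambda_{s,1}^{-1}(\cu_m)-\ahom_*^{-1}(\cu_0))_+^\xi]^{1/\xi} \le \css{s}\sum_{k\le m} 3^{-s(m-k)}\E[(M_k-\ahom_*^{-1}(\cu_0))_+^\xi]^{1/\xi}$.
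For each $k$, I bound $\E[(M_k - \ahom_*^{-1}(\cu_0))_+^\xi]$ by a union bound over the $\le 3^{d(m-k)}$ choices of $z\in 3^k\Zd\cap\cu_m$ (each $z+\cu_k\subseteq\cu_m$), reducing to a single-block estimate $\E[(|\a_*^{-1}(z+\cu_k)| - \ahom_*^{-1}(\cu_0))_+^\xi]$. By subadditivity \eqref{e.subadditivity}/\eqref{e.cg.integral.bounds} and stationarity, one has $|\a_*^{-1}(z+\cu_k)| \le$ an average over unit sub-cubes, hence its positive part above $\ahom_*^{-1}(\cu_0)$ is dominated (in $L^\xi$) by a constant times $\E[\lambda_{s,1}^{-\xi}(\cu_0)]^{1/\xi}$—this is the clean step, since $\lambda_{s,1}^{-1}(\cu_0)$ already dominates $|\a_*^{-1}(\cu_0)|$ up to the $\css{s}^{-1}$ factor absorbed into $C$. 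Summing the geometric series $\sum_k 3^{-s(m-k)} 3^{d(m-k)/\xi} \sim (d/\xi - s)^{-1}\cdot$(leading term) and tracking the $3^{-sm}$ versus $3^{-dk/\xi}$ bookkeeping yields the stated $\frac{C\xi}{d-2s} 3^{-(s-d/\xi)m}$ (the $(d-2s)^{-1}$ rather than $(d/\xi-s)^{-1}$ comes from combining with the $k$-sum tail and the factor-of-two slack in $s'\in[\tfrac12 s,s]$-type splittings; alternatively one uses $\xi\ge 16d(1-\Lams-\lams)^{-1}$ from \ref{a.ellipticity} to absorb constants).

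For the final inequality \eqref{e.tilde.Theta.vs.Theta}, I combine \eqref{e.lambda.moment.bound} and \eqref{e.Lambda.moment.bound} with $\specs_i \le 1 < d$ so $\specs_i - d/\xi > 0$ (using $\xi\ge 16d$), writing $\lambda_{\specs_2,1}^{-1}(\cu_m) \le \ahom_*^{-1}(\cu_0) + (\lambda_{\specs_2,1}^{-1}(\cu_m)-\ahom_*^{-1}(\cu_0))_+$ and similarly for $\Lambda_{\specs_1,1}(\cu_m)$, then multiplying the two $L^\xi$ bounds and using $(a+b)(c+d)\le ac + bd + $ cross terms controlled by $ac$ and the decaying pieces, plus $\ahom(\cu_0)\ahom_*^{-1}(\cu_0)=\Theta_0$ and $\max_i\frac{C\xi}{d-2\specs_i}\le C\xi^2$ (since $d-2\specs_i \ge 1-\Lams-\lams \ge 16d/\xi$, so $(d-2\specs_i)^{-1}\le \xi/(16d)\le C\xi$, and there are two factors).

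**Main obstacle.** The delicate point is the single-block/union-bound step: one must show that $|\a_*^{-1}(z+\cu_k)|$—a max over unit cells is \emph{not} what enters, rather $\a_*^{-1}$ of a large cube—has a positive part above $\ahom_*^{-1}(\cu_0)$ whose $\xi$-th moment is comparable to $\E[\lambda_{s,1}^{-\xi}(\cu_0)]$, uniformly in $k$. This needs the monotonicity $\ahom_*^{-1}(\cu_k)\le\ahom_*^{-1}(\cu_0)$ (so the subtracted constant is the right one) \emph{together with} a way to dominate the cube quantity $|\a_*^{-1}(z+\cu_k)|$ by something stationary and unit-scale: the identity is that $\lambda_{s,1}^{-1}(z+\cu_k) \ge \css{s}|\a_*^{-1}(z+\cu_k)|$ (the $j=k$ term in its own defining sum), and $\lambda_{s,1}^{-1}(z+\cu_k)$ is subadditive-controlled by averages of $\lambda_{s,1}^{-1}$ over unit sub-cubes—but proving this last subadditivity for the \emph{composite} quantity $\lambda_{s,1}^{-1}$ (not just for $|\a_*^{-1}(\cdot)|$) requires a short computation rearranging the double sum over scales, which is the technical heart. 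I would isolate this as the one genuine estimate and treat the rest as bookkeeping.
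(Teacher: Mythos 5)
There is a genuine gap at the single--block step, and it is exactly where your concrete plan diverges from your own opening heuristic. In your ``Concretely'' paragraph you bound, for every scale $k$, the quantity $\E\bigl[(|\a_*^{-1}(z+\cu_k)|-\ahom_*^{-1}(\cu_0))_+^\xi\bigr]^{1/\xi}$ by a constant times $\E[\lambda_{s,1}^{-\xi}(\cu_0)]^{1/\xi}$, i.e.\ by an $O(1)$ quantity uniform in $k$. With only this input, the union bound over the $3^{d(m-k)}$ blocks and the discount $3^{-s(m-k)}$ give terms of size $3^{-(s-d/\xi)(m-k)}\cdot O(1)$; the coarsest term $k=m$ (a single block, namely $\cu_m$ itself) contributes $O(1)$, so the sum is $O(1)$ and cannot produce the factor $3^{-(s-d/\xi)m}$ claimed in \eqref{e.lambda.moment.bound}. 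The decay in $m$ can only come from the fact that on the coarse scales $1\le k\le m$ the block quantity concentrates: by subadditivity, $|\a_*^{-1}(z+\cu_k)|\le \ahom_*^{-1}(\cu_0)+\bigl|\avsum_{z'}\a_*^{-1}(z'+\cu_0)-\ahom_*^{-1}(\cu_0)\bigr|$, and the fluctuation term has $L^\xi$ size $C\xi\,3^{-dk/2}\,\E[|\a_*^{-1}(\cu_0)|^\xi]^{1/\xi}$ by Rosenthal's inequality (this is where the unit range of dependence and the $\xi$-th moment assumption are used quantitatively). That CLT-scale factor $3^{-dk/2}$ is what beats the union-bound cost and makes the series over $1\le n\le m$ summable with constant $C\xi/(d-2s)$, while the scales $k\le 0$ are lumped into $3^{-sm}\max_{z\in\Zd\cap\cu_m}\lambda_{s,1}^{-1}(z+\cu_0)$ and handled by a crude union bound. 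You state this concentration idea qualitatively in your first paragraph, but your quantitative bookkeeping never uses it, and your attribution of the $(d-2s)^{-1}$ constant to ``$k$-sum tails and $s'\in[\tfrac12 s,s]$ slack'' is not correct: it comes from summing $3^{-(d/2+d/\xi-s)n}$, i.e.\ from the variance exponent $d/2$.

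Relatedly, your ``Main obstacle'' paragraph identifies the wrong technical heart. No subadditivity of the composite quantity $\lambda_{s,1}^{-1}$ (rearranging the double sum over scales) is needed: plain subadditivity of $\a_*^{-1}$ over unit sub-cubes, as in \eqref{e.subadditivity}--\eqref{e.cg.integral.bounds}, already reduces the coarse-scale blocks to spatial averages of the stationary unit-scale variables, and the genuine estimate is the concentration of those averages. Once \eqref{e.lambda.moment.bound} and \eqref{e.Lambda.moment.bound} are in hand, your derivation of \eqref{e.tilde.Theta.vs.Theta} (splitting off the positive parts, multiplying, using $\ahom_*^{-1}(\cu_0)\le\E[\lambda_{\specs_1,1}^{-1}(\cu_0)]$, $\ahom(\cu_0)\le\E[\Lambda_{\specs_2,1}(\cu_0)]$ and $(d-2\specs_i)^{-1}\le C\xi$) matches the paper and is fine.
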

\begin{proof}
We start by estimating 
\begin{equation*} 
\lambda_{s,1}^{-1} (\cu_m)
\leq
\cs \sum_{n=1}^m 3^{-s(m-n)} \max_{z \in 3^n \Zd \cap \cu_m}\bigl| \a_*^{-1}(z+\cu_n) \bigr|
+
3^{-sm} \max_{z \in \Zd \cap \cu_m} \lambda_{s,1}^{-1} (z+\cu_0)
\,.
\end{equation*}
By subadditivity, we get, for every~$n\in \N$, 
\begin{equation*} 
\bigl| \a_*^{-1}(z+\cu_n) \bigr|
\leq
\biggl| \avsum_{z' \in z+ \Zd \cap \cu_n} \a_*^{-1}(z'+\cu_0)  \biggr|
\leq
\ahom_*^{-1}(\cu_0)
+
\biggl|  \avsum_{z' \in z+ \Zd \cap \cu_n} \a_*^{-1}(z'+\cu_0)  - \ahom_*^{-1}(\cu_0) \biggr|
\end{equation*}
and then, by Rosenthal's inequality (a concentration inequality for finite moments of sums of independent random variables, see~\cite[Theorem 15.11]{BLM}), 
\begin{align*} 
\lefteqn{
\E\biggl[ \Bigl| \avsum_{z \in \Zd \cap \cu_n} \a_*^{-1}(z+\cu_0)  - \ahom_*^{-1}(\cu_0) \Bigr|^{\xi} \biggr]^{\nf 1\xi}
} \qquad & 
\notag \\ &
\leq C \xi^{\nf 12} 3^{-\frac d2 n } \E\bigl[ |\a_*^{-1}(\cu_0)|^2  \bigr]^{\nf 12}
+ C \xi 3^{-d(1-\nf 1\xi) n } \E\bigl[ |\a_*^{-1}(\cu_0)|^\xi  \bigr]^{\nf 1\xi}
\leq C \xi 3^{-\frac d2 n } \E\bigl[ |\a_*^{-1}(\cu_0)|^\xi  \bigr]^{\nf 1\xi}
\,.
\end{align*}
Thus, by a union bound 
\begin{align*} 
\notag
\lefteqn{
\E\biggl[\biggl( \cs \sum_{n=1}^m 3^{-s(m-n)} \max_{z \in 3^n \Zd \cap \cu_m} \a_*^{-1}(z+\cu_n) - \ahom_*^{-1}(\cu_0)\biggr)_{\!+}^{\!\xi}\biggr]^{\nf 1\xi}
} \qquad &
\notag \\ &
\leq
C \xi s 3^{-(s-\frac d\xi) m}  \E\bigl[ |\a_*^{-1}(\cu_0)|^\xi  \bigr]^{\nf 1\xi} \sum_{n=1}^m 3^{-(\frac d2  + \frac d\xi - s)n} 
\leq
\frac{C \xi s}{d-2s}  3^{-(s-\frac d\xi) m} \E\bigl[ \lambda_{s,1}^{-\xi} (\cu_0) \bigr]^{\nf 1\xi} 
\,.
\end{align*}
By another union bound,
\begin{equation*} 
3^{-sm} \E\biggl[ \max_{z \in \Zd \cap \cu_m} \lambda_{s,1}^{-\xi} (z+\cu_0) \biggr]^{\nf 1\xi} 
\leq
C 3^{-(s-\frac d\xi) m} \E\bigl[ \lambda_{s,1}^{-\xi} (\cu_0) \bigr]^{\nf 1\xi} 
\,.
\end{equation*}
Combining the above displays yields~\eqref{e.lambda.moment.bound}.
The proof of~\eqref{e.Lambda.moment.bound} is analogous and we omit it.

\smallskip 

Finally, by~$\ahom_*^{-1}(\cu_0) \leq \E[\lambda_{\nu_1,1}^{-1}(\cu_0)]$ and~$\ahom(\cu_0) \leq \E[\Lambda_{\nu_2,1}(\cu_0)]$, 
\begin{align*} 
\tilde{\Theta}_m & =
\E \bigl[ \lambda_{\specs_1,1}^{-\xi} (\cu_m) \bigr]^{\nf 1\xi}
\E \bigl[ \Lambda_{\specs_21}^{\xi} (\cu_m) \bigr]^{\nf 1\xi}
\notag \\ &
\leq
\Bigl(\E \Bigl[ \bigl(\lambda_{\specs_1,1}^{-1} (\cu_m) -\ahom_*^{-1}(\cu_0)   \bigr)_+^{\xi} \Bigr]^{\nf 1\xi} + \ahom_*^{-1}(\cu_0) \Bigr)
\Bigl(
\E \Bigl[ \bigl( \Lambda_{\specs_2,1} (\cu_m) - \ahom(\cu_0)  \bigr)_+^\xi \Bigr]^{\nf 1\xi}
+\ahom(\cu_0) \Bigr)
\notag \\ &
\leq
\Theta_0
+
C\xi^2 3^{-(\specs_1 \wedge \specs_2-\frac d\xi) m} 
\tilde{\Theta}_0
 \,,
\end{align*}
and thus~\eqref{e.tilde.Theta.vs.Theta} follows. The proof is complete.
\end{proof}

We combine the previous lemmas to obtain the contraction of~$\Theta_m$ for~$m\geq C\log\tilde{\Theta}_0$, in the case that the first alternative of Lemma~\ref{l.pigeon} is valid.

\begin{lemma}[{Contraction of $\Theta_m$ after~$C\log\tilde{\Theta}_0$ scales}]
\label{l.toss.J}
There exists a constant~$C(d)<\infty$ such that, for every~$\delta \in (0,\nf 12]$ and~$m\in\N$ satisfying 
\begin{equation}
m \geq \xi \log \bigl(
\delta^{-1} \xi  \tilde{\Theta}_0  
\bigr) 
\label{e.big.jump}
\end{equation} 
and
\begin{equation}
\ahom(\cu_0) \leq (1+\delta) \ahom(\cu_m) 
\quad \mbox{and} \quad 
\ahom_*^{-1} (\cu_0) \leq (1+\delta) \ahom_*^{-1} (\cu_m) \,,
\label{e.hypothesis.squish}
\end{equation}
we have the estimate
\begin{equation}
\Theta_m 
\leq 
1 
+ 
C\xi \delta^{\nicefrac14} \Theta_0
\,.
\label{e.Theta.m.bound}
\end{equation}
\end{lemma}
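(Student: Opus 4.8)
The plan is to apply Lemma~\ref{l.J.upperbound} with the special choice~$p = \ahom^{-\nf12}e$, $q=\ahom^{\nf12}e$ (so that the defect functional~$J$ becomes exactly the multiscale defect quantity from Definition~\ref{def.mathcalE}), together with the natural choices~$p_0 = \ahom^{-\nf12}e$, $q_0 = \ahom^{\nf12}e$, and then to bound every term on the right-hand side of that lemma by something of order~$\xi \delta^{\nf14}\Theta_0$ plus a scale-separation error that is absorbed using~\eqref{e.big.jump}. Here~$\ahom$ should be taken to be~$\ahom_*(\cu_m)$ (or~$\ahom(\cu_m)$, which under~\eqref{e.hypothesis.squish} differs by only a factor~$1+\delta$); the point of the hypothesis~\eqref{e.hypothesis.squish} is precisely that the annealed matrices have stopped moving between scales~$0$ and~$m$, so that~$\tau_{0,m}(p,q)$ — and hence the first error term~$C\tau^{\nf12}\E[J(\cu_m,p,q)]^{\nf12}$ — is controlled by~$\delta\,\ahom(\cu_0)$, i.e. of size~$\delta\Theta_0$ (after dividing through by~$\ahom_*(\cu_m) \asymp \ahom_*(\cu_0)$).

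First I would set up the normalization: dividing the conclusion of Lemma~\ref{l.J.upperbound} by~$\tfrac12\,\ahom_*(\cu_m) = \tfrac12\, q_0\cdot\ahom_*^{-1}(\cu_m)q_0$-type quantities and using~\eqref{e.J.a.astar} together with~\eqref{e.monotonicity.homs}, the left side~$\E[J(\cu_m,p,q)] - \tfrac12 p_0\cdot q_0$ becomes (up to the~$1+\delta$ slack coming from~\eqref{e.hypothesis.squish}) comparable to~$\Theta_m - 1$ plus lower-order pieces. Next I would feed in the weak-norm estimates of Lemma~\ref{l.weaknorms}, applied with~$s=\specs_1$ (resp.~$t=\specs_2$), $s' = \tfrac12 s$ or similar in the admissible range, and with the \emph{same} choices~$p_0,q_0$. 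The crucial observation is that in Lemma~\ref{l.weaknorms} the leading term~$\sum_j 3^{-s(m-j)}(\avsum_z |\a_*^{-1}(z+\cu_j)q - p - p_0|^2)^{\nf12}$ is itself controlled by the deviation of~$\a_*^{-1}(z+\cu_j)$ from~$\ahom_*^{-1}(\cu_m)$, which (by subadditivity~\eqref{e.subadditivity}, the ordering~\eqref{e.ordering}, and~\eqref{e.hypothesis.squish}) is again of order~$\delta$ on the relevant scales — this is where~\eqref{e.hypothesis.squish} does its second job. The remaining terms of Lemma~\ref{l.weaknorms} involve the subadditivity defect~$\avsum_z J(z+\cu_j,p,q) - J(\cu_m,p,q)$ (telescoping-summable over scales, with total controlled by~$\E[J(\cu_0,p,q)] - \E[J(\cu_m,p,q)] \lesssim \delta\Theta_0$ after taking expectations and using stationarity) times~$\lambda_{\cdot,1}^{-\nf12}$ or~$\Lambda_{\cdot,1}^{\nf12}$; applying Cauchy–Schwarz in~$\P$, Hölder in the scale sum, and the moment bounds~\eqref{e.lambda.moment.bound}--\eqref{e.Lambda.moment.bound} of Lemma~\ref{l.lambda.bounds}, these contribute the desired~$C\xi\,\delta^{\nf12}\,\Theta_0$-type bound (taking a square root of~$\delta\Theta_0$ paired against~$\tilde\Theta_0^{\nf12}$, and using~$\tilde\Theta_0 \lesssim \Theta_0^{1/2}$... — more carefully: the product $\delta^{\nf12}\Theta_0^{\nf12} \cdot \tilde\Theta_0^{\nf12}$ must be re-expressed, and it is here that one only gets the weaker exponent~$\delta^{\nf14}$ rather than~$\delta^{\nf12}$, because~$\tilde\Theta_0$ and~$\Theta_0$ are not comparable — one pays a power of the contrast and rebalances via Young's inequality against the~$\delta$, yielding~$\delta^{\nf14}$).

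The scale-separation terms~$C 3^{-(m-k)}\E[J(\cu_m,p,q)]$, $Cs^{-1}3^{-s(m-k)}|p_0|$, and the analogous flux terms, together with the~$3^{-(s-s')(m-k)}\lambda_{\cdot,1}^{-\nf12}J^{\nf12}$ contributions, are handled by choosing the inner scale~$k$ (the mesoscopic cutoff in Lemmas~\ref{l.J.upperbound} and~\ref{l.weaknorms}) to be a fixed fraction of~$m$, say~$k = \lfloor m/2\rfloor$, so that~$3^{-(s-s')(m-k)} \leq 3^{-c\specs_1 m/2}$; by the hypothesis~\eqref{e.big.jump} that~$m \geq \xi\log(\delta^{-1}\xi\tilde\Theta_0)$ (together with~$\specs_1,\specs_2 > \tfrac18(1-\Lams-\lams) > 0$ bounded below, and~$\xi \geq 16d(1-\Lams-\lams)^{-1}$ ensuring~$\specs_i - d/\xi$ stays positive), this exponential smallness beats the polynomial-in-$\xi$, polynomial-in-$\tilde\Theta_0$ prefactors and is~$\ll \delta^{\nf14}\Theta_0$. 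I would also use Lemma~\ref{l.lambda.bounds}, specifically~\eqref{e.tilde.Theta.vs.Theta}, to replace any stray~$\tilde\Theta_m$ by~$\Theta_0$ plus such a negligible scale-separation error. The main obstacle I anticipate is the bookkeeping in the step where the subadditivity-defect term is paired against the coarse-grained ellipticity constants: one must take expectations \emph{before} applying the deterministic weak-norm bound pointwise, use the finite-range dependence~\ref{a.frd} implicitly only through the moment assumption~\eqref{e.ellipticity} (it enters via Lemma~\ref{l.lambda.bounds}), and carefully track which exponent~$\specs_1$ vs.~$\specs_2$ and which admissible~$s'$ is used where, so that the Hölder exponents in the scale-sum all close with room to spare — it is precisely the slack~$\tfrac18(1-\Lams-\lams)$ built into the definitions of~$\specs_1,\specs_2$ that provides this room, and verifying that no term consumes more than its share of that slack is the delicate part. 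Once every term is bounded by~$1 + C\xi\delta^{\nf14}\Theta_0$ (the~$1$ coming from the~$\tfrac12 p_0\cdot q_0$ normalization), the estimate~\eqref{e.Theta.m.bound} follows.
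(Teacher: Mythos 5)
Your high-level skeleton (Lemma~\ref{l.J.upperbound} plus the weak-norm bounds of Lemma~\ref{l.weaknorms}, with \eqref{e.hypothesis.squish} controlling the additivity defect and \eqref{e.big.jump} absorbing scale-separation errors through Lemma~\ref{l.lambda.bounds}) is the same as the paper's, but the crux of the argument is the choice of the four vectors, and yours does not work. The paper takes $p=\m_0^{-\nf12}e$, $q=\m_0^{\nf12}e$ with $\m_0=(\ahom(\cu_m)\ahom_*(\cu_m))^{\nf12}$ the \emph{geometric mean}, and, crucially, $p_0=\ahom_*^{-1}(\cu_m)q-p$ and $q_0=q-\ahom(\cu_m)p$, i.e.\ the annealed block averages of $\nabla v_m$ and $\a\nabla v_m$ (cf.~\eqref{e.exact.response}). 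With these choices $\E[J(\cu_m,p,q)]-\frac12 p_0\cdot q_0=\frac12(\Theta_m-1)$, and the subcube averages in Lemma~\ref{l.weaknorms} become $(\a_*^{-1}(z+\cu_j)-\ahom_*^{-1}(\cu_m))q$, a fluctuation plus an $O(\delta)$ drift. Your choices $p_0=p$, $q_0=q$ destroy both features: the left side becomes $\frac12\Theta_m-1$, so at best you conclude $\Theta_m\leq 2+C\xi\delta^{\nf14}\Theta_0$, which is too weak for the final iteration down to $1+\sigma$; and, much worse, the weak norms are then not small at all, since with your normalization $(\nabla v_m)_{z+\cu_j}\approx \ahom_*^{-1}(\cu_m)q-p=0\neq p_0$ and $(\a\nabla v_m)_{z+\cu_j}\approx q-\ahom(\cu_m)p$, which differs from $q_0=q$ by a term of size $\Theta_m$ in the relevant units. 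Indeed your own ``crucial observation''---that the leading term of Lemma~\ref{l.weaknorms} is controlled by the deviation of $\a_*^{-1}(z+\cu_j)$ from $\ahom_*^{-1}(\cu_m)$---requires exactly $p_0=\ahom_*^{-1}(\cu_m)q-p$, contradicting your stated choice; as written the argument does not close.

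A second genuine error is your parenthetical claim that under \eqref{e.hypothesis.squish} the matrices $\ahom_*(\cu_m)$ and $\ahom(\cu_m)$ ``differ by only a factor $1+\delta$''. The hypothesis compares each annealed matrix at scale $m$ with the same matrix at scale $0$; the ratio $\ahom(\cu_m)\ahom_*^{-1}(\cu_m)=\Theta_m$ is precisely the quantity the lemma must bound and is a priori only $\leq\Theta_0$, which may be enormous. This also explains why the geometric-mean normalization matters: it balances the gradient and flux contributions so that both $|\m_0^{\nf12}p_0|$ and $|\m_0^{-\nf12}q_0|$ are $\leq\Theta_0^{\nf12}$ (cf.~\eqref{e.bounds.for.p0.q0}--\eqref{e.compare.m0.ahom0}); normalizing by $\ahom_*(\cu_m)$ alone leaves an unbalanced factor of the contrast. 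Two smaller inaccuracies: the deviation of the random quantity $\a_*^{-1}(z+\cu_j)$ from $\ahom_*^{-1}(\cu_m)$ is not deterministically $O(\delta)$---the paper bounds it in expectation by the $\delta$-drift plus a variance term estimated directly from the finite-range assumption \ref{a.frd} (not only through Lemma~\ref{l.lambda.bounds}); and the exponent $\delta^{\nf14}$ arises from optimizing the Cauchy parameter $\ep$ against the $\delta^{\nf12}$ in the weak-norm bound \eqref{e.weak.norms.in.the.proof}, not from ``paying a power of the contrast''---the hypothesis \eqref{e.big.jump} is exactly what guarantees that no power of $\tilde\Theta_0$ survives.
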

\begin{proof}
We fix~$\delta \in (0,\nf 12]$ and~$m \in \N$, and assume that both~\eqref{e.big.jump} and~\eqref{e.hypothesis.squish} are valid. Let
\begin{equation*} 
s \coloneqq \specs_1 + \frac18 (1-\gamma_1 - \gamma_2) 
\qand
t \coloneqq
\specs_2 + \frac18 (1-\gamma_1 - \gamma_2) 
  \,,
\end{equation*}
so that~$t+s=\frac12 (1+\gamma_1 + \gamma_2) < 1$. We introduce the auxiliary quantity
\begin{align}
\widetilde{J}(U,p,q)
&
\coloneqq  
J(U,p,q) - 
\frac12 
( \ahom_*^{-1}(U)q - p )
\cdot 
( q -  \ahom(U) p )
\notag \\ & \;
= 
\frac12 p \cdot \bigl( \a(U) - \ahom(U) \bigr) p 
+ \frac12 q \cdot \bigl( \a_*^{-1} (U) - \ahom_*^{-1} (U) \bigr) q 
+ \frac12 p\cdot ( \ahom(U) \ahom_*^{-1}(U) - \Id ) q 
\,,
\label{e.J.tilde.def}
\end{align}
where in the last line we used~\eqref{e.J.a.astar}. Taking expectations, the first two terms on the right vanish and we obtain 
\begin{equation*}
\E \bigl[ \widetilde{J}(U,p,q) \bigr] 
=
\frac12 p\cdot ( \ahom(U) \ahom_*^{-1}(U) - \Id ) q 
\,.
\end{equation*}
We deduce that, for any~$\mu\in (0,\infty)$, 
\begin{equation}
\bigl| \ahom(U) \ahom_*^{-1}(U) - \Id \bigr| 
\leq 
2 \max_{|e|=1} 
\E \bigl[ \widetilde{J}(U, \mu^{-1} e , \mu e ) \bigr] 
\,.
\label{e.ahom.astarhom.ratio}
\end{equation}
We apply this inequality with~$\mu \coloneqq \m_0^{\nf12}$, where~$\m_0$ is the geometric mean of~$\ahom(\cu_m)$ and~$\ahom_*(\cu_m)$,
\begin{equation}
\m_0 \coloneqq 
\bigl( \ahom(\cu_m) \ahom_*(\cu_m) \bigr)^{\nf12} 
\,.
\label{e.m0.choice}
\end{equation}
Thus the left side of~\eqref{e.hypothesis.squish} is bounded by 
\begin{equation}
\Theta_m = 
\ahom(\cu_m) \ahom_*^{-1}(\cu_m)
\leq
1 
+
2 \max_{|e|=1} 
\E \bigl[ \widetilde{J}\bigl(\cu_m, \m_0^{-\nf12} e , \m_0^{\nf12} e \bigr) \bigr] 
\,.
\label{e.setup}
\end{equation}
To bound the right side of~\eqref{e.ahom.astarhom.ratio}, we fix~$|e|=1$ and apply Lemmas~\ref{l.J.upperbound} and~\ref{l.weaknorms}, with the choices:
\begin{equation}
\left\{
\begin{aligned}
& p \coloneqq \m_0^{-\nf12} e 
\,, \\  
& q \coloneqq \m_0^{\nf12} e  = \m_0 p
\,, \\  
& p_0 \coloneqq \ahom_*^{-1}(\cu_m) q - p 
= \m_0^{-\nf12} \bigl( \ahom^{\nf12}(\cu_m) \ahom_*^{-\nf12} (\cu_m) - \Id \bigr)e
\,, \\ 
& 
q_0 \coloneqq q - \ahom(\cu_m) p 
= 
\m_0^{\nf12} \bigl( \Id -   \ahom^{\nf12}(\cu_m) \ahom_*^{-\nf12} (\cu_m) \bigr)e
= -\m_0 p_0
\,.
\end{aligned}
\right.
\label{e.parameter.choices}
\end{equation}
Before we apply the lemmas, we record some preliminary estimates. First, observe that the hypothesis~\eqref{e.hypothesis.squish} and our choices in~\eqref{e.m0.choice} and~\eqref{e.parameter.choices} yield, after some straightforward computations, 
\begin{equation}
\bigl| \m_0^{-\nf12} q_0 \bigr| 
=
\bigl| \m_0^{\nf12} p_0 \bigr| 
=
\bigl| 
\ahom^{\nf12}(\cu_m) \ahom_*^{-\nf12} (\cu_m) - \Id 
\bigr| 
\leq 
\Theta_0^{\nf12} 
\,,
\label{e.bounds.for.p0.q0}
\end{equation}
\begin{equation}
\m_0^{-1} \ahom(\cu_0) 
\leq (1+\delta) \Theta_0^{\nf12} 
\qquad \mbox{and} \qquad 
\m_0 \ahom_{*}^{-1} (\cu_0) 
\leq (1+\delta) \Theta_0^{\nf12} \,,
\label{e.compare.m0.ahom0}
\end{equation}
and, for~$k\in \N$ with~$k \leq m$, 
\begin{equation}
\E \bigl[ J(\cu_k,p,q) \bigr]
\leq 
\E \bigl[  J(\cu_0, p,q) \bigr]
=
\frac12 e \cdot 
\bigl( \m_0^{-1} \ahom(\cu_0) 
+ 
\m_0 \ahom_*^{-1} (\cu_0) 
- 2
\bigr)
e 
\leq 
(1+\delta) 
\Theta_0^{\nf12}
\label{e.J.bounds.squish}
\end{equation}
and
\begin{align}
\tau_{m,k}(p,q) 
&
=
\E \bigl[  J(\cu_k, p,q) \bigr]
-
\E \bigl[ J(\cu_m,p,q) \bigr]
\notag \\ &
=
\frac12 e \cdot 
\bigl( \m_0^{-1} ( \ahom(\cu_k) -\ahom(\cu_m) ) 
+ 
\m_0 ( \ahom_*^{-1} (\cu_k) - \ahom_*^{-1}(\cu_m)) 
\bigr) e
\leq 
\delta \Theta_0^{\nf12} 
\,.
\label{e.tau.mk.pq}
\end{align}
We also have the moment bound
\begin{equation} 
\label{e.J.moments}
\E\bigl[ J(\cu_0,p,q)^\xi\bigr]^{\nf 1\xi}
\leq 
\frac12 \m_0^{-1} \E\bigl[ | \a(\cu_0)|^\xi\bigr]^{\nf 1\xi}  
+
\frac12 \m_0 \E\bigl[ | \a_*^{-1}(\cu_0)|^\xi\bigr]^{\nf 1\xi}  
\leq 
\tilde{\Theta}_0
\,.
\end{equation}
In view of~\eqref{e.lambda.moment.bound} and~\eqref{e.Lambda.moment.bound},~\eqref{e.big.jump} implies that 
\begin{equation} 
\label{e.lambda.bounds.applied}
\ahom_*(\cu_0) \E\bigl[ \lambda_{\specs_1,1}^{-\xi}(\cu_m) \bigr]^{\nf 1\xi} + \ahom^{-1}(\cu_0) \E\bigl[ \Lambda_{\specs_2,1}^\xi(\cu_m) \bigr]^{\nf 1\xi}
\leq 
C
\,.
\end{equation} 
We are ready to apply Lemma~\ref{l.J.upperbound}. Fix~$\ep > 0$ to be selected below.  Lemma~\ref{l.J.upperbound}, Cauchy's inequality,~\eqref{e.bounds.for.p0.q0},~\eqref{e.compare.m0.ahom0},~\eqref{e.J.bounds.squish} and~\eqref{e.tau.mk.pq} yield
\begin{align}
\E\bigl  [ \tilde{J} (\cu_m, p , q ) \bigr]
&
=
\E\Bigl  [ {J} (\cu_m, p , q ) - \frac12 p_0 \cdot q_0 \Bigr]
\notag \\ & 
\leq
\bigl(\ep + C 3^{-m} + C \ep^{-1} \delta^2 \bigr)\Theta_0 
\notag \\ & \qquad 
+
C\ep^{-1} \m_0
3^{-2sm} \E \Bigl[  \| \nabla v(\cdot,\cu_m,p,q) - p_0 \|_{\Besov{-s}{2}{1}(\cu_m)}^2 \Bigr] 
\notag \\ & \qquad 
+
C\ep^{-1} \m_0^{-1} 3^{-2tm} \E \Bigl[  \| \a \nabla v(\cdot,\cu_m,p,q) - q_0 \|_{\Besov{-t}{2}{1}(\cu_m)}^2  \Bigr] 
\,.
\label{e.J.tilde.initial.plug}
\end{align}
To estimate the weak norms on the last two lines of~\eqref{e.J.tilde.initial.plug}, we apply Lemma~\ref{l.weaknorms}. We claim that
\begin{multline} 
\label{e.weak.norms.in.the.proof}
\ahom_*(\cu_0) 
\E \bigl[ 
3^{-2sm} 
\| \nabla v(\cdot,\cu_m,p,q) - p_0  \|_{\Besov{-s}{2}{1}(\cu_m)}^2 
\bigr]
+
\ahom^{-1}(\cu_0)
\E \bigl[
3^{-2tm} 
\| \a \nabla v(\cdot,\cu_m,p,q) - q_0 \|_{\Besov{-t}{2}{1}(\cu_m)}^2 
\bigr]
\\
\leq
C \delta^{\nf 12} (1-\gamma_1 - \gamma_2)^{-2} \Theta_0^{\nf 12}
\leq 
C  \delta^{\nf 12} \xi^2 \Theta_0^{\nf 12}
\,.
\end{multline}
In view of the previous two displays,~\eqref{e.Theta.m.bound} follows by selecting~$\ep := \delta^{\nf 14}(1-\gamma_1 - \gamma_2)^{-1}$ and using~\eqref{e.big.jump}.  Hence, after the claim~\eqref{e.weak.norms.in.the.proof} has been proven, we complete the proof. To this end, we only show the estimate in~\eqref{e.weak.norms.in.the.proof} for the first term on the left since the argument for the second one is completely analogous using~\eqref{e.weak.norms.flux.bound} instead of~\eqref{e.weak.norms.gradient.bound}. 

\smallskip

We estimate the terms on the right in~\eqref{e.weak.norms.gradient.bound} (applied with~$s' = \specs_1$). First, we claim that
\begin{align} 
\label{e.variance.term.one}
\lefteqn{
\E\biggl[ 
\biggl( 
\sum_{j=0}^m 3^{-s(m-j)}\biggl( \avsum_{z \in 3^j \Zd \cap \cu_m}
| (\a_*^{-1}(z+\cu_j) q - p ) - p_0  |^2 \biggr)^{\! \nf12}
\biggr)^{\!2}
\biggr]
} \quad &
\notag \\ &
\leq
C s^{-2} \ahom_*^{-1}(\cu_0) \Theta_0^{\nf 12}
\Bigl(  \delta^{\nf 12} + 3^{-\frac18 sm} \ahom_*(\cu_0) \E \bigl[ | \a_*^{-1}(\cu_0)|^2 \bigr]^{\nf 12} \Bigr)
\Bigl( \ahom_*^3(\cu_0) \E \bigl[ \lambda_{s,1}^{-3}(\cu_m)   \bigr] +  1\Bigr)^{\nf12}  
 \,.
\end{align}
By H\"older's inequality, we obtain that 
\begin{align*} 
\lefteqn{
\biggl( 
\sum_{j=0}^m 3^{-s(m-j)}\biggl( \avsum_{z \in 3^j \Zd \cap \cu_m}
\bigl| (\a_*^{-1}(z+\cu_j) q - p ) - p_0  \bigr|^2 \biggr)^{\! \nf 12}
\biggr)^{\! 2}
} \quad &
\notag \\ &
\leq
C s^{-2} \m_0 \bigl( \lambda_{s,1}^{-1}(\cu_m) +  \ahom_*^{-1}(\cu_m) \bigr)^{\! \nf 32} 
\biggl( 
s \sum_{j=0}^m 3^{-\frac14 s(m-j)}
\avsum_{z \in 3^j \Zd \cap \cu_m}
\bigl| \a_*^{-1}(z+\cu_j) - \ahom_*^{-1}(\cu_m) \bigr|\biggr)^{\! \nf 12}
\,.
\end{align*}
Thus, by taking the expectation and applying H\"older's inequality once more, we deduce that
\begin{align*} 
\lefteqn{
\E\biggl[ 
\biggl( 
\sum_{j=0}^m 3^{-s(m-j)}\biggl( \avsum_{z \in 3^j \Zd \cap \cu_m}
\bigl| (\a_*^{-1}(z+\cu_j) q - p ) - p_0  \bigr|^2 \biggr)^{\! \nf 12}
\biggr)^{\! 2}\biggr]
} \qquad &
\notag \\ &
\leq C s^{-2} \m_0 \E \Bigl[ \bigl( \lambda_{s,1}^{-1}(\cu_m) +  \ahom_*^{-1}(\cu_0) \bigr)^{3} \Bigr]^{\nf12} 
\biggl( s \sum_{j=0}^m 3^{-\frac14 s(m-j)}
\E\Bigl[
\bigl| \a_*^{-1}(\cu_j) - \ahom_*^{-1}(\cu_m) \bigr|\Bigr] \biggr)^{\! \nf 12}
\,.
\end{align*}
The last term can be estimated using stationarity, subadditivity~\eqref{e.subadditivity} and~\ref{a.frd} as
\begin{align*} 
\E\bigl[ | \a_*^{-1}(\cu_j) - \ahom_*^{-1}(\cu_m) |\bigr]
&
\leq
C \bigl( \ahom_*^{-1}(\cu_0) - \ahom_*^{-1}(\cu_m)\bigr)
+
\var\biggl[\avsum_{z \in \Zd \cap \cu_j} \a_*^{-1}(z+\cu_0) \biggr]^{\nf 12} 
\notag \\ & 
\leq
C \delta \ahom_*^{-1}(\cu_0) 
+ C 3^{-\frac d4 j} \E \bigl[ | \a_*^{-1}(\cu_0)|^2 \bigr]^{\nf 12}
\,.
\end{align*}
Combining the previous four displays yields~\eqref{e.variance.term.one}. Second, by using~\eqref{e.tau.mk.pq},~\eqref{e.J.moments} and~\eqref{e.lambda.moment.bound} together with H\"older's inequality and~\eqref{e.big.jump}, we get
\begin{align*} 
\lefteqn{
\E\biggl[
\lambda_{\specs_1,1}^{-1} (\cu_m) 
\biggl( 
\sum_{j=0}^m 
3^{-(s-\specs_1) (m-j)}
\biggl( \avsum_{z\in 3^j\Zd \cap \cu_m}
J(z+\cu_j,p,q) 
-
J(\cu_m,p,q)
\biggr)^{\!\nf12}\biggr)^{\!2} \biggr]
} \qquad\qquad &
\notag \\ &
\leq
(s-\specs_1)^{-2} \ahom_*^{-1}(\cu_0)
\Bigl(
\E\bigl[J(\cu_0,p,q) \bigr]
-
\E\bigl[ J(\cu_m,p,q)\bigr]\Bigr)
\notag \\ & \qquad 
+
(s-\specs_1)^{-2}  \E\Bigl[
\bigl(\lambda_{\specs_1,1}^{-1} (\cu_m) - \ahom_*^{-1}(\cu_0)\bigr)_+^2 
\Bigr]^{\nf 12} 
\E\bigl[ J(\cu_0,p,q)^2\bigr]^{\nf 12}
\notag \\ &
\leq
C (1-\specs_1-\specs_2)^{-2} 
\ahom_*^{-1}(\cu_0)\bigl( \delta \Theta_0^{\nf 12}
+ 3^{-(\specs_1 - \frac d\xi) m} \tilde{\Theta}_0 \bigr)
\leq
C \xi^2 
\ahom_*^{-1}(\cu_0)\delta  \Theta_0^{\nf 12}
\,.
\end{align*}
Third, by~\eqref{e.J.moments} and~\eqref{e.lambda.bounds.applied}, we obtain by~\eqref{e.big.jump} that
\begin{equation*} 
s^{-1} 3^{-2(s-\specs_1)m} 
\E\bigl[ \lambda_{\specs_1,1}^{-1} (\cu_m) 
J(\cu_m,p,q) \bigr] 
+
C s^{-2} 3^{-2sm} |p_0|^2 
\leq 
C \delta \ahom_*^{-1} (\cu_0)  \Theta_0^{\nf 12}
 \,.
\end{equation*}
Combining the above two displays with~\eqref{e.variance.term.one} and applying~\eqref{e.lambda.bounds.applied} once more together with~\eqref{e.big.jump} leads to the estimate for the first term on the left in~\eqref{e.weak.norms.in.the.proof}. As commented already before, the estimate for the second term is analogous. This concludes the proof. 
\end{proof}

Combining Lemmas~\ref{l.pigeon},~\ref{l.lambda.bounds} and~\ref{l.toss.J}, we obtain the following statement. 

\begin{lemma}
\label{l.one.step} 
There exists~$C(d) <\infty$ such that, for each~$\sigma \in (0,\nf12]$, 
\begin{equation*}
N \geq 
C\xi  
 \sigma^{-4} \log (\sigma^{-1}) 
\log \bigl(C \xi\tilde{\Theta}_0  \bigr)
\quad \implies \quad 
\tilde{\Theta}_N \leq 1 + \sigma \Theta_0 \,.
\end{equation*}
\end{lemma}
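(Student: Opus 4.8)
The plan is to chain the three preceding lemmas. Lemma~\ref{l.pigeon} is used to produce, among the first~$N$ triadic scales, either a run of~$h$ consecutive scales on which the annealed matrices~$\ahom(\cu_{\cdot})$ and~$\ahom_*^{-1}(\cu_{\cdot})$ vary by at most a factor~$1+\delta$, or else a direct contraction~$\Theta_{N'}\le\sigma_\ast\Theta_0$. On a good run one applies the contraction estimate of Lemma~\ref{l.toss.J} to drive~$\Theta$ down to~$1+\tfrac\sigma2\Theta_0$ at some intermediate scale~$n$; in the other case one already has~$\Theta_{N'}\le\sigma_\ast\Theta_0$. Finally, Lemma~\ref{l.lambda.bounds}, applied over the scales left after~$n$, converts control of~$\Theta$ at the intermediate scale into control of~$\tilde\Theta_N$, paying only a scale‑separation error that is made negligible by requiring~$N$ to exceed~$n$ by a suitable margin~$N_0$.

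Concretely, I would take~$\delta:=(\sigma/(C\xi))^4$, $\sigma_\ast:=\tfrac\sigma2$, $h:=\lceil \xi\log(\delta^{-1}\xi\tilde\Theta_0)\rceil$ — so that the hypothesis~\eqref{e.big.jump} of Lemma~\ref{l.toss.J} is met by every jump of length~$h$ — and a purification budget~$N_0:=\lceil C\xi\log(C\xi\sigma^{-1}\tilde\Theta_0)\rceil$. Imposing~$N\ge \lceil 2\delta^{-1}|\log\sigma_\ast|\rceil h + N_0$ and applying Lemma~\ref{l.pigeon} with top scale~$N':=N-N_0$: if the second alternative holds, set~$n:=N'$, so~$\Theta_n\le\sigma_\ast\Theta_0\le 1+\tfrac\sigma2\Theta_0$; if the first alternative holds, the run occupies scales~$n-h,\dots,n$ with~$n\le N'$, and after the linear rescaling~$x\mapsto 3^{\,n-h}x$ — which preserves~\ref{a.stationarity}--\ref{a.iso} (the range of dependence only contracts) and sends the coarse‑grained matrices at scale~$k$ to those of the rescaled field at scale~$k-(n-h)$ — Lemma~\ref{l.toss.J} applies with base scale~$n-h$ and jump~$h$, its two hypotheses being precisely the conclusion of the pigeonhole alternative and~$h\ge\xi\log(\delta^{-1}\xi\tilde\Theta_{n-h})$; it yields~$\Theta_n\le 1+C\xi\delta^{1/4}\Theta_{n-h}\le 1+\tfrac\sigma2\Theta_0$ by the choice of~$\delta$. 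In either case~$n\le N'$ and~$\Theta_n\le 1+\tfrac\sigma2\Theta_0$, so the scale‑shifted form of Lemma~\ref{l.lambda.bounds} (base scale~$n$) gives
\[
\tilde\Theta_N \;\le\; \Theta_n + C\xi^2\,3^{-(\nu_1\wedge\nu_2-d/\xi)(N-n)}\,\tilde\Theta_n \;\le\; 1+\tfrac\sigma2\Theta_0 + C\xi^2\,3^{-(d/\xi)N_0}\,\tilde\Theta_0 ,
\]
where I used~$N-n\ge N_0$, $\tilde\Theta_n\le\tilde\Theta_0$, $\Theta_0\ge1$, and the bound~$\nu_1\wedge\nu_2-d/\xi\ge\tfrac1{16}(1-\Lams-\lams)\ge d/\xi$ — which is exactly where the assumption~$16d(1-\Lams-\lams)^{-1}\le\xi$ in~\ref{a.ellipticity} enters. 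The choice of~$N_0$ makes the last term at most~$\tfrac\sigma2\Theta_0$, giving~$\tilde\Theta_N\le 1+\sigma\Theta_0$.

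The main obstacle is the final accounting. One must verify that the three contributions to the budget — the run length~$h$, the pigeonhole chaining factor~$\lceil 2\delta^{-1}|\log\sigma_\ast|\rceil$, and the purification tail~$N_0$ — combine into a sufficient condition on~$N$ of the shape stated in the lemma. Since~$\delta^{-1}$ is a fixed power of~$\xi\sigma^{-1}$ and~$\sigma\le\tfrac12$ (so~$|\log\sigma|\ge\log 2$), each of the logarithmic factors~$\log(\delta^{-1}\xi\tilde\Theta_0)$ and~$\log(C\xi\sigma^{-1}\tilde\Theta_0)$ is comparable, up to a universal multiple, to~$\log(\sigma^{-1})\log(C\xi\tilde\Theta_0)$; absorbing the subdominant term~$N_0$ and collecting powers then produces a threshold~$N\ge C\xi^{O(1)}\sigma^{-4}\log(\sigma^{-1})\log(C\xi\tilde\Theta_0)$, and trimming the~$\xi$‑dependence to the single power displayed in the statement is a matter of optimizing the choices of~$\delta$ and of the extracted run length. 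A secondary point that must be stated carefully is the legitimacy of translating Lemmas~\ref{l.toss.J} and~\ref{l.lambda.bounds} to a non‑zero base scale~$j$: besides the preservation of the structural hypotheses under~$x\mapsto 3^{\,j}x$ noted above, one uses that the annealed integrability~\ref{a.ellipticity} passes to scale~$j$ by the monotonicity of~$m\mapsto\E[\Lambda_{\Lams,1}^{\xi}(\cu_m)]$ and~$m\mapsto\E[\lambda_{\lams,1}^{-\xi}(\cu_m)]$, itself a consequence of subadditivity~\eqref{e.subadditivity}.
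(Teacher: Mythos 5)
Your proposal is correct and follows essentially the same route as the paper: pigeonhole selection of a good run, the (scale-shifted) contraction of Lemma~\ref{l.toss.J} with $\delta\sim(\sigma/\xi)^4$ on that run, and Lemma~\ref{l.lambda.bounds} to convert control of $\Theta$ at an intermediate scale into control of $\tilde\Theta_N$. The only differences are bookkeeping---the paper pushes the contraction to scale $N-m$ by monotonicity of $\Theta_n$ and applies \eqref{e.tilde.Theta.vs.Theta} over the final $m$ scales rather than reserving a separate tail budget $N_0$, and it leaves the base-scale shift implicit which you spell out---so the argument matches the paper's.
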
 
\begin{proof} 
Let~$\delta \in (0,\nf12]$ to be selected below and let~$m \coloneqq  \bigl\lceil \xi \log \bigl( \delta^{-1}  \xi \tilde{\Theta}_0  \bigr) \bigr\rceil$. Assume that~$N \geq (k+1) m$ with~$k\coloneqq  \lceil 2 \delta^{-1}|\log\sigma| \rceil$. We intend to show that, if~$\delta \coloneqq c \sigma^4 \xi^{-4}$ with~$c(d)$ sufficiently small, then~$\Theta_{N-m} \leq 1+\frac12 \sigma \Theta_0$. 
We may assume that the first alternative of Lemma~\ref{l.pigeon} is valid, otherwise there is nothing to prove. We may therefore find~$n \in \{ m,\ldots,N-m\}$ such that 
\begin{equation}
\ahom(\cu_{n-m} ) \leq (1+\delta) \ahom(\cu_{n}) 
\quad \mbox{and} \quad 
\ahom_*^{-1} (\cu_{n-m} ) \leq (1+\delta) \ahom_*^{-1} (\cu_{n}) 
\,.
\end{equation}
Applying Lemma~\ref{l.toss.J} yields~$\Theta_n \leq 1 + C\xi \delta^{\nicefrac14} \Theta_{n-m}$. Taking~$c(d)>0$ sufficiently small and using the monotonicity of~$n\mapsto \Theta_n$, we obtain~$\Theta_{N-m} \leq 1 + \frac12 \sigma \Theta_0$, as claimed. 
By taking~$c$ smaller, if necessary, and applying~\eqref{e.tilde.Theta.vs.Theta} in Lemma~\ref{l.lambda.bounds}, we obtain  
\begin{equation*}
\tilde{\Theta}_{N}
\leq
(1+C\delta)\Theta_{N-m}
\leq 
1
+
\sigma \Theta_{0}\,.
\end{equation*}
This completes the proof. 
\end{proof} 

An iteration of Lemma~\ref{l.one.step} completes the proof of the main result. 

\begin{proof}[{Proof of Theorem~\ref{t.highcontrast}}] 
By performing~$C\log (1+\Theta_0)$ iterations of Lemma~\ref{l.one.step} with~$\sigma = \nf12$, we obtain
\begin{equation*}
N \geq C \log^2 (1+ \tilde{\Theta}_0)
\quad \implies \quad 
\tilde{\Theta}_N \leq 2 \,.
\end{equation*}
Applying it once more with~$\sigma \in (0,\nf 12]$ yields the theorem. 
\end{proof} 

\subsubsection*{\bf Acknowledgments}
S.A. was supported by NSF grant DMS-2350340. 
T.K. was supported by the Academy of Finland.

{\footnotesize
\bibliographystyle{alpha}
\bibliography{refs}
}

\end{document}